\newtheorem{theorem}{Theorem}[section]
\newtheorem*{fibering lemma}{Fibering Lemma}
\newtheorem*{decomposition lemma}{Decomposition Lemma}
\newtheorem*{hurewicz theorem}{Hurewicz Theorem}
\newtheorem{lemma}[theorem]{Lemma}
\newtheorem{proposition}[theorem]{Proposition}
\newtheorem{corollary}[theorem]{Corollary}
\theoremstyle{definition}
\newtheorem{definition}[theorem]{Definition}
\newtheorem{comm}[theorem]{Comment}
\newtheorem{example}[theorem]{Example}
\newtheorem{question}[theorem]{Question}
\newcommand{\U}[1]{{\mathcal{U}_{#1}}}
\newcommand{\V}[1]{{\mathcal{V}_{#1}}}
\newcommand{\W}[1]{{\mathcal{W}_{#1}}}
\newcommand{\df}[1]{{{\bf #1}}}
\DeclareMathOperator{\as}{asdim}
\DeclareMathOperator{\con}{con}
\DeclareMathOperator{\mesh}{mesh}
\DeclareMathOperator{\diam}{diam}
\DeclareMathOperator{\dist}{dist}
\DeclareMathOperator{\ord}{ord}
\begin{document}

\bibliographystyle{abbrv}

\title[On product stability of asymptotic property~C]{On stability of asymptotic property~C for products and some group extensions}
\author[G.~Bell]{G.~Bell}
\address{Department of Mathematics and Statistics, The University of North Carolina at Greensboro, Greensboro, NC 27402, USA} 
\email{gcbell@uncg.edu}

\author[A.~Nag\'orko]{A.~Nag\'orko}

\address{Faculty of Mathematics, Informatics, and Mechanics, University of Warsaw, Banacha 2, 02-097 Warszawa, Poland
}
\email{amn@mimuw.edu.pl}

\begin{abstract}
We show that Dranishnikov's asymptotic property~C is preserved by  direct products and the free product of discrete metric spaces. In particular, if $G$ and $H$ are groups with asymptotic property~C, then both $G \times H$ and $G * H$ have asymptotic property~C. We also prove that a group~$G$ has asymptotic property~C if $1\to K\to G\to H\to 1$ is exact, if $\as K<\infty$, and if $H$ has asymptotic property~C. The groups are assumed to have left-invariant proper metrics and need not be finitely generated. These results settle questions of Dydak and Virk~\cite{dydak-virk2016}, of Bell and Moran~\cite{bell-moran2015}, and an open problem in topology in \cite{openproblems}.
\end{abstract}
\subjclass[2010]{54F45 (primary), 20F69 (secondary)}
\keywords{asymptotic property {C}, asymptotic dimension}
\thanks{The second author was supported by the NCN (Narodowe Centrum Nauki) grant no. 2011/01/D/ST1/04144.}

\maketitle
\section{Introduction}

Asymptotic property~C is a coarse invariant of metric spaces introduced by A.~N.~Dranishnikov in 2000~\cite{dranish2000}.
Recently it received a lot of attention~\cite{beckhardt2015, beckhardt-goldfarb2016, bell-moran2015, dranishnikov-zarichnyi2014, dydak2016, dydak-virk2016, yamauchi2015}.

Asymptotic property~C is a coarse analog of topological property~C~\cite{haver}. Topological property~C fails to be preserved by direct products in very striking ways: there is an example of a separable metrizable space $X$ with property~C whose product with the irrationals fails to have property~C~\cite{EPol1986} (under the Continuum Hypothesis); there is also a separable complete metric space $X$ with property~C whose square $X\times X$ fails to have property~C~\cite{Pol-Pol2009}.
The question whether asymptotic property~C is preserved by direct products is widely known and was published in~\cite{openproblems}.
We answer it in the affirmative in the following theorem.

\begin{theorem}\label{thm:direct product}
  Let $X$ and $Y$ be metric spaces.
  If $X$ and $Y$ have asymptotic property~C, then $X \times Y$ has asymptotic property~C.
\end{theorem}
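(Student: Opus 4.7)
The plan is to apply asymptotic property C of $Y$ and of $X$ in succession to carefully chosen auxiliary sequences, form product families from the resulting covers, and then enumerate them so that the disjointness scales match the prescribed sequence. Let $R_1 \leq R_2 \leq \cdots$ be given. Apply asymptotic property C of $Y$ to a sequence $S_1 \leq S_2 \leq \cdots$ to obtain uniformly bounded families $\mathcal{V}_1,\ldots,\mathcal{V}_m$ covering $Y$ with $\mathcal{V}_j$ being $S_j$-disjoint, and similarly apply asymptotic property C of $X$ to a sequence $T_1 \leq T_2 \leq \cdots$ to obtain uniformly bounded families $\mathcal{U}_1,\ldots,\mathcal{U}_n$ covering $X$ with $\mathcal{U}_i$ being $T_i$-disjoint. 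The $nm$ product families
\[
\mathcal{W}_{i,j} := \{\, U \times V : U \in \mathcal{U}_i,\ V \in \mathcal{V}_j \,\}
\]
cover $X \times Y$, have uniformly bounded mesh, and each is $\min(T_i, S_j)$-disjoint in the product metric, since two distinct members must differ in at least one factor.

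To match the prescribed sequence under a suitable enumeration, I would choose the auxiliary scales so that only one factor controls the minimum at each level. A natural choice is to take the $X$-scales $T_i$ large enough that $\min(T_i,S_j) = S_j$, and then set $S_j := R_{jn}$; under the enumeration $t := (j-1)n + i$ for $j = 1,\ldots,m$ and $i = 1,\ldots,n$, the family $\mathcal{W}_{i,j}$ is placed in slot $t$ with disjointness $R_{jn} \geq R_{(j-1)n+i} = R_t$, as required. A symmetric scheme swapping the roles of $X$ and $Y$ works equally well.

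The main obstacle I foresee is the circular dependence: $n$ arises from asymptotic property C of $X$ applied to the auxiliary sequence $\{T_i\}$, which must be chosen large in terms of $nm$, while $m$ arises from asymptotic property C of $Y$ applied to $\{R_{jn}\}_j$, which itself depends on $n$. I expect to resolve this either by a monotone fixed-point argument exploiting the fact that a cover valid for a larger scale sequence remains valid for any smaller one, or, more cleanly, by invoking the Decomposition Lemma stated earlier to merge the $m$ partial covers of $X \times \bigcup \mathcal{V}_j$ (one per level $j$) into a single asymptotic property C cover of $X \times Y$. The Fibering Lemma could then be used to transfer asymptotic property C from $Y$ to $X \times Y$, once the slabs $X \times V$ are verified to carry asymptotic property C uniformly in $V$.
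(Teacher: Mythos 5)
Your setup is correct as far as it goes: the product families $\mathcal{W}_{i,j}$ do cover $X\times Y$, are uniformly bounded, and are $\min(T_i,S_j)$-disjoint, and you have correctly identified the circular dependence between $n$ (the number of $X$-families, which depends on the scales $T_i$) and $m$ (the number of $Y$-families, which depends on $S_j=R_{jn}$) as the crux. But none of your three suggested resolutions closes this gap. A monotone fixed-point iteration need not terminate: for a space with asymptotic property~C but infinite asymptotic dimension, the number of families required can grow without bound as the scales grow (if it were bounded, the space would have finite asymptotic dimension), so iterating ``enlarge $T_i$, recompute $n$, recompute $m$'' may never stabilize --- and the case of infinite asymptotic dimension is exactly the one the theorem must handle. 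The Decomposition Lemma does not apply, because it requires each family to have asymptotic dimension bounded by $k$ uniformly, whereas your slabs $X\times V$ only carry asymptotic property~C. Finally, citing the Fibering Lemma would indeed yield the theorem (the paper remarks as much), but the proof of that lemma is precisely where the missing idea lives, so as a self-contained argument this defers rather than supplies the proof.

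The paper breaks the circularity by reversing the order of the two applications and making infinitely many of them on the $X$ side. Rearrange the given sequence $R_1\le R_2\le\cdots$ diagonally into infinitely many infinite columns $R_{i,1}\le R_{i,2}\le\cdots$, so that each entry's original index is at least the index of the slot it will eventually occupy when the array is read back. Apply asymptotic property~C of $X$ to \emph{every} column, obtaining for each $i$ a finite number $n_i$ of uniformly bounded families $\mathcal{U}_{i,1},\ldots,\mathcal{U}_{i,n_i}$ with $\mathcal{U}_{i,j}$ being $R_{i,j}$-disjoint. Only then apply asymptotic property~C of $Y$, once, to the infinite sequence of column maxima $R_{1,n_1},R_{2,n_2},\ldots$; this yields finitely many families $\mathcal{V}_1,\ldots,\mathcal{V}_m$ with $\mathcal{V}_i$ being $R_{i,n_i}$-disjoint, and the finiteness of $m$ automatically discards all but the first $m$ columns. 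Since $R_{i,j}\le R_{i,n_i}$, each $\mathcal{W}_{i,j}=\{U\times V:U\in\mathcal{U}_{i,j},\,V\in\mathcal{V}_i\}$ is $R_{i,j}$-disjoint, and undoing the rearrangement places it in a slot whose prescribed scale it dominates. No fixed point is needed: all the $X$-data is produced before any $Y$-scale is chosen, and the finiteness built into asymptotic property~C of $Y$ selects only finitely many columns. If you prefer your fibering route, you must prove the Fibering Lemma by this same rearrangement and then check (which is easy) that coarse fibers of the projection $X\times Y\to Y$ have uniform asymptotic property~C.
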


Theorem~\ref{thm:direct product} is an analog of a theorem of D. M. Rohm that states that topological property~C is preserved by a direct product if one of the factors is compact~\cite[Theorem 3]{rohm1990}.
Asymptotic property~C has a compactness-like property built into the definition, which requires that the sequence of constructed families is finite.

The technique used to prove Theorem \ref{thm:direct product} can also be used to prove a useful fibering lemma:

\begin{fibering lemma}
  If $f \colon X \to Y$ is 
  a uniformly expansive map of metric spaces, 
  $Y$ has asymptotic property~C, and 
  coarse fibers of~$f$ have uniform asymptotic property~C,
  then $X$ has asymptotic property~C.
\end{fibering lemma}

As an immediate consequence of this lemma, we can provide an affirmative answer to a question of J.~Dydak and \v{Z}.~Virk \cite{dydak-virk2016} regarding preservation of asymptotic property~C via maps with finite asymptotic dimension.

The Fibering Lemma is effective in settling various questions concerning asymptotic property~C of countable groups or finitely generated groups. Consider the following short exact sequences:
\[
  1 \to G \to G \times H \to H \to 1
\]
and
\[
  1 \to [[G, H]] \to G * H \to G \times H \to 1
\]
with groups $G$ and $H$ endowed with left-invariant proper metrics.
In the present paper we show that the surjective maps $G \times H \to H$ and $G * H \to G \times H$ satisfy the assumptions of the Fibering Lemma if $G$ and $H$ have asymptotic property~C. In the latter sequence $[[G, H]]$ is the commutator subgroup of $G * H$, which is quasi-isometric to a tree~\cite{serre}, hence it is of finite asymptotic dimension. Therefore the last exact sequence is a special case of the group extension
\[
  1 \to K \to G \to H \to 1,
\]
which (as we show) satisfies assumptions of the Fibering Lemma if $K$ has finite asymptotic dimension and $H$ has asymptotic property~C. These observations are summarized in Theorem~\ref{thm:permanence for groups}.

\begin{theorem}\label{thm:permanence for groups}
  Let $G$ and $H$ be groups with proper left-invariant metrics.
  \begin{enumerate}
  \item If $G$ and $H$ have asymptotic property~C, then $G \times H$ has asymptotic property~C.
  \item If $G$ and $H$ have asymptotic property~C, then $G * H$ has asymptotic property~C.
  \item If $1 \to K \to G \to H \to 1$ is an exact sequence, $H$ has asymptotic property~C and $K$ has finite asymptotic dimension, then $G$ has asymptotic property~C.
  \end{enumerate}
\end{theorem}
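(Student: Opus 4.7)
The plan is to derive all three statements from the Fibering Lemma applied to the natural surjective homomorphism in each setting. For any surjection $\pi\colon G \twoheadrightarrow H$ of groups with proper left-invariant metrics, I would equip $H$ with the quotient metric $d_H(h,h') = \inf\{d_G(g,g') : \pi(g)=h,\ \pi(g')=h'\}$, which is proper, left-invariant, and makes $\pi$ a $1$-Lipschitz map, hence uniformly expansive. Since any two proper left-invariant metrics on $H$ are coarsely equivalent, this choice does not affect the asymptotic property~C hypothesis on~$H$.

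Item~(1) follows immediately from Theorem~\ref{thm:direct product}: the proper left-invariant product metric on $G \times H$ is coarsely equivalent to any other proper left-invariant metric on $G \times H$. For item~(3) I would apply the Fibering Lemma to $\pi\colon G \to H$; the target has asymptotic property~C by hypothesis, so everything reduces to verifying uniform asymptotic property~C of the coarse fibers. Item~(2) is then either a special case of~(3), or a direct application of the Fibering Lemma to the canonical surjection $G * H \to G \times H$, whose target has asymptotic property~C by~(1), and whose kernel $[[G,H]]$ is free by Serre's theorem~\cite{serre}, hence quasi-isometric to a tree and of asymptotic dimension at most one.

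The common technical content of~(2) and~(3) is thus a single claim: in an extension $1 \to K \to G \to H \to 1$ with $\asdim K \leq n$, the coarse fibers of $\pi\colon G \to H$ have uniform asymptotic property~C. Left-invariance provides the starting point: every coset $gK$ is isometric to $K$, so a family of covers witnessing $\asdim K \leq n$ at a given scale transports via left multiplication to identical covers on every coset with the same mesh and multiplicity bounds. A coarse fiber $\pi^{-1}(B_R(h_0))$ is a disjoint union of the cosets $\pi^{-1}(h)$ with $h \in B_R(h_0)$, and this union is finite by properness of $d_H$. Combining the per-coset covers and coloring them by the quotient map $\pi$ should yield $n+1$ uniformly bounded $R$-disjoint families covering the fiber, with bounds depending only on $R$ and on the asymptotic-dimension data of $K$, and not on the particular fiber.

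The main obstacle will be precisely this last packaging step: although each coset carries identical asymptotic-dimension data, the number of cosets inside a fiber grows with the radius~$R$, and one must ensure that the combined covers still have multiplicity~$n+1$ and uniformly bounded mesh. The natural approach is to choose the per-coset scale sufficiently large that cover elements originating from distinct cosets are either contained in a single translate of a $K$-cover set or else separated by more than~$R$; the $H$-valued coloring induced by $\pi$ then permits a finite-union argument across cosets without inflating multiplicity beyond $n+1$. Once this uniform control is in place, the Fibering Lemma delivers asymptotic property~C for~$G$, and with it both~(3) and~(2).
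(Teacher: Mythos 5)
Your overall architecture is the same as the paper's: item (1) is Theorem~\ref{thm:direct product} together with coarse equivalence of proper left-invariant metrics, and items (2) and (3) are the Fibering Lemma applied to the quotient homomorphism, with (2) reduced to (3) via $1\to[[G,H]]\to G*H\to G\times H\to 1$ and Serre's theorem. Your device of giving $H$ the quotient metric so that $\pi$ becomes $1$-Lipschitz is a legitimate variant of the paper's construction in Theorem~\ref{thm:surjection}, which instead lifts a weighted generating set of $H$ to $G$; your version even avoids the finite-generation hypothesis appearing there.

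The gap is in the one step carrying the real content: showing that the coarse fibers of $\pi$ have uniform asymptotic property~C when $\as K\le n$. Your plan is to cover each coset $gK$ separately by translated witnesses of $\as K\le n$ and then recombine them using ``the $H$-valued coloring induced by $\pi$'', after choosing the per-coset scale so large that same-colored sets from distinct cosets are either contained in a single translate of a $K$-cover set or else $R$-separated. This dichotomy fails in general, and the recombination cannot work as described: by Definition~\ref{def:apc fibers} (see Comment~\ref{comm:apc fibers}) the number $k$ of families must not depend on the scale $M$, so you only have $n+1$ colors in total, not $n+1$ per coset; and two distinct cosets can be metrically parallel at bounded distance (already $K\times\{0\}$ and $K\times\{1\}$ inside $K\times\mathbb{Z}$; in a non-split extension the isometries $K\to gK$ given by left translation are not aligned with one another in any usable way), so same-colored sets living in different cosets may lie within distance $R$ of each other no matter how large you take the per-coset scale, and nothing forces them into a common translate of a cover set. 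What this step really needs is the finite union theorem for asymptotic dimension. The paper's route (Lemmas~\ref{lem:group-actions-W_R}, \ref{lem:group-action-BG-space} and \ref{lem:G -> H coarse fibers}) is: first use left-invariance to translate an arbitrary $A$ with $\diam \pi(A)\le M$ into the single reference fiber $W_M(e)=\pi^{-1}(\bar B(e,M))$, so that only one set per scale must be covered; then note that by properness $W_M(e)$ is a finite union of left translates of $K$, each isometric to $K$, and invoke the finite union theorem~\cite{bell-dranishnikov2001} to get $\as W_M(e)\le n$, which yields the $n+1$ $R$-disjoint uniformly bounded families at once, uniformity over all fibers coming from the translation. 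If you replace your recombination paragraph by this argument (or simply cite the union theorem for the fixed fiber), the rest of your proposal goes through and coincides with the paper's proof.
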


Item (2) in Theorem \ref{thm:permanence for groups} settles the question of whether asymptotic property~C is preserved by free products, which was posed in \cite{bell-moran2015}. We do not consider the more general question of free products with an amalgamated subgroup.

The result about free products of groups can be extended to free products of metric spaces.

\begin{theorem}\label{thm:free product}
  Let $X$ and $Y$ be metric spaces that are discrete in the metric sense (see Definition \ref{def:discrete}).
  If $X$ and $Y$ have asymptotic property~C, then $X * Y$ has asymptotic property~C.
\end{theorem}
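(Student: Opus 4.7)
The plan is to prove Theorem~\ref{thm:free product} by applying the Fibering Lemma along a natural projection from $X * Y$ to a tree, in direct analogy with the proof of Theorem~\ref{thm:permanence for groups}(2) for groups. The discreteness hypothesis ensures that $X * Y$ is well-defined as a metric space on the set of alternating reduced words, and it is also what lets us separate pieces that will otherwise collide in the coarse-fiber analysis.

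To set up the fibration, fix basepoints $x_0 \in X$ and $y_0 \in Y$. Elements of $X * Y$ are finite alternating words $w = s_1 s_2 \cdots s_n$, metrized so that consecutive syllable copies of $X$ and $Y$ are glued isometrically at their basepoints. I would define a tree $T$ whose vertices are the \emph{addresses} of words (prefixes $s_1 \cdots s_{n-1}$ together with a tag recording whether $s_n$ belongs to $X$ or to $Y$), with edges joining addresses that differ by one syllable. Let $p \colon X * Y \to T$ send each word to its address. A short computation with the path metric shows that $p$ is uniformly expansive; and because $T$ is a tree, $\as T \leq 1$, so in particular $T$ has asymptotic property~C. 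The preimage $p^{-1}(v)$ of a single vertex is isometric to $X$ or $Y$, and since there are only two isometry types of such preimages, the family $\{p^{-1}(v) : v \in T\}$ has uniform asymptotic property~C by hypothesis. Invoking the Fibering Lemma then yields the theorem, provided the coarse-fiber condition can be upgraded from preimages of vertices to preimages of balls.

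That upgrade is the main obstacle. Whenever $X$ or $Y$ is infinite, the tree $T$ has infinite branching, so the coarse fiber $p^{-1}(B(v, R))$ for $R \geq 1$ is an \emph{infinite} wedge of copies of $X$ and $Y$ glued in a bounded-depth tree pattern, rather than a single copy. Thus one must supplement the projection-to-tree argument with a wedge-type lemma asserting that an arbitrary wedge of pointed discrete metric spaces with uniformly bounded asymptotic property~C parameters again has asymptotic property~C, uniformly. The discreteness assumption on $X$ and $Y$ enters precisely here: it guarantees that distinct pieces of the wedge can be covered independently without interference, so that the families required by the definition of asymptotic property~C can be chosen piecewise and then assembled into a single family with controlled $\mesh$ and bounded $\ord$. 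With this wedge lemma in hand, the hypotheses of the Fibering Lemma are verified and the proof is complete; I expect the bulk of the technical work to go into this uniform wedge argument rather than into the tree projection itself.
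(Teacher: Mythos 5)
Your strategy --- project $X*Y$ onto a tree of ``addresses'' and invoke the Fibering Lemma --- is not the route the paper takes (the paper explicitly abandons the Fibering Lemma here and instead proves the theorem via the Decomposition Lemma, covering $\ast(X\vee Y)$ by cones over flat sets and showing those cones are uniformly quasi-isometric to simplicial trees), and as written your argument has a genuine gap exactly at the point you flag as ``the main obstacle.'' The problem is the order of quantifiers in Definition~\ref{def:apc fibers} (see Comment~\ref{comm:apc fibers}): the number $k$ of families covering a coarse fiber may depend on the sequence $R_1, R_2, \ldots$ but \emph{not} on the scale $M$ of the fiber. The preimage $p^{-1}(B(v,M))$ is not a wedge but a depth-$M$ tree of wedges of copies of $X$ and $Y$; a wedge lemma applied one level at a time yields a number of families that grows with $M$, which fails to verify the hypothesis of the Fibering Lemma. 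Your proposal does not explain how $k$ is to be kept independent of $M$, and this is where the real difficulty lives.

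The premise of your wedge lemma is also not right. Discreteness gives only the lower bound $2E$, with $E=\inf\{\dist(x,y)\colon x\neq y\}>0$, on the distance between points lying in distinct copies attached at a common wedge point; it does not make distinct copies $R$-disjoint for large $R$, so families chosen ``independently'' in each copy are not $R_i$-disjoint near the wedge point and cannot simply be assembled. One must excise a ball $\bar B(x_0,R)$ from each piece so that the leftover flat sets in different copies genuinely become $R$-disjoint, and then handle the excised part, which is an $M$-level tree-like set, separately. That is precisely what the paper's cone construction and Lemmas~\ref{lem:cone family} and~\ref{lem:components} accomplish: the excised part has asymptotic dimension $1$ uniformly, and the Decomposition Lemma converts $R_i$-disjoint families of uniformly $1$-dimensional sets into asymptotic property~C at the cost of a multiplicative factor of $2$ in the number of families rather than a summand per level. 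In short, closing your gap requires essentially the entire content of the paper's actual proof, and the tree projection does not by itself deliver the uniformity that the Fibering Lemma demands.
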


The proof differs from the proof given in the case of groups; it relies on the following Decomposition Lemma and is of independent interest.
\begin{decomposition lemma}
  Let $X$ be a metric space. 
  Assume that there exists $k$ such that for each (infinite) sequence $R_1, R_2, \ldots$ of real numbers there exists a finite sequence $\U1, \U2, \ldots, \U{n}$ of families of subsets of $X$ such that
  \begin{enumerate}
  \item $\bigcup_i \U{i}$ covers $X$,
  \item $\U{i}$ is $R_i$-disjoint,
  \item $\U{i}$ has asymptotic dimension bounded by $k$ uniformly.
  \end{enumerate}
  Then $X$ has asymptotic property~C.
\end{decomposition lemma}

The authors would like to thank the anonymous referee for many helpful suggestions and improvements. The authors have also been made aware that Theorem \ref{thm:direct product} was shown independently in \cite{Davila}.


\section{Preliminaries}\label{sec:preliminaries}

Let $U$ and $V$ be subsets of a metric space $X$. 
Let $R>0$ be a real number. The subsets $U$ and $V$ are said to be \df{$R$-disjoint} if $\dist(u,u')>R$ whenever $u\in U$ and $u'\in U'$. A family of subsets $\U{}$ of a metric space $X$ will be said to be \df{$R$-disjoint} if $U$ and $U'$ are $R$-disjoint whenever $U$ and $U'$ are distinct elements of $\U{}$. 
We define the \df{mesh} of a collection $\U{}$ of subsets of $X$ by $\mesh(\U{})=\sup\{\diam(U)\colon U\in \U{}\}$. A family of subsets $\U{}$ of a metric space $X$ will be said to be \df{uniformly bounded} if $\mesh(\U{})<\infty$. We will occasionally describe a family as being \df{$B$-bounded} when $\mesh(\U{})\le B$ for some $B>0$. 

Asymptotic dimension was defined by Gromov \cite{gromov93} as an asymptotic analog of covering dimension.

\begin{definition} A metric space $X$ is said to have \df{asymptotic dimension at most $n$}, $\as X\le n$, if for every $R>0$ there are $(n+1)$-many uniformly bounded families of $R$-disjoint sets $\U{0},\U{1},\ldots, \U{n}$ so that $\cup_i\U{i}$ covers $X$. 
\end{definition}

Occasionally we will need more control over the disjointness and mesh of covers of a collection of sets. We say that a family $\{X_\alpha\}_\alpha$ of subsets of a metric space $X$ has asymptotic dimension no more than $n$ \df{uniformly} if for every $R$ there is a $B$ so that each $X_\alpha$ can be covered by at most $(n+1)$-many $R$-disjoint families of $B$-bounded sets. 

Dranishnikov defined asymptotic property~C \cite{dranish2000} as an asymptotic analog of Haver's topological property~C~\cite{haver}. 

\begin{definition}
  A metric space $X$ has \df{asymptotic property~C} if
  for each (infinite) sequence $R_1, R_2, \ldots$ of real numbers
  there exists a finite sequence $\U1, \U2, \ldots, \U{n}$ of families of
  subsets of $X$ such that
  \begin{enumerate}
  \item $\bigcup_i \U{i}$ covers $X$,
  \item $\U{i}$ is $R_i$-disjoint,
  \item $\U{i}$ is uniformly bounded.
  \end{enumerate}
\end{definition}

There is no loss of generality in assuming that the numbers $R_i$ comprise a non-decreasing sequence. It is clear that spaces with finite asymptotic dimension have asymptotic property~C; it is also true that metric spaces with asymptotic property~C have Yu's property A~\cite[Theorem 7.11]{dranish2000}. 


It is easy to see that the union of a finite collection of sets with asymptotic property~C has asymptotic property~C. In fact, this is true for certain types of infinite unions, too (see~\cite[Theorem 4.2]{bell-moran2015}).

\begin{proposition}\label{prop:apc-union}
Let $X$ and $Y$ have asymptotic property~C. Then $X\cup Y$ has asymptotic property~C.
\end{proposition}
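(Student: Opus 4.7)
The plan is to prove this by a straightforward concatenation argument that exploits the flexibility in choosing the sequence $R_1, R_2, \ldots$ when verifying asymptotic property C. The essential observation is that, unlike in the definition of asymptotic dimension (where for a single $R$ we must cover by a bounded number of families), the definition of asymptotic property C lets us use as many families as we want, provided their total number is finite and the $i$-th family is $R_i$-disjoint for the prescribed $R_i$. This gives us room to ``run'' $X$'s witness sequence first and then $Y$'s.

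Concretely, fix an arbitrary sequence $R_1, R_2, \ldots$ of positive reals. First, apply asymptotic property C for $X$ to the sequence $R_1, R_2, \ldots$ to obtain a finite sequence $\U1, \U2, \ldots, \U{n}$ of uniformly bounded families of subsets of $X$ such that $\U{i}$ is $R_i$-disjoint and $\bigcup_{i=1}^n \U{i}$ covers $X$. Next, apply asymptotic property C for $Y$ to the \emph{shifted} sequence $R_{n+1}, R_{n+2}, \ldots$ to obtain a finite sequence $\V1, \V2, \ldots, \V{m}$ of uniformly bounded families of subsets of $Y$ such that $\V{j}$ is $R_{n+j}$-disjoint and $\bigcup_{j=1}^m \V{j}$ covers $Y$.

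Now define $\W{i} = \U{i}$ for $1 \le i \le n$ and $\W{n+j} = \V{j}$ for $1 \le j \le m$. Viewing all sets as subsets of $X \cup Y$ (with the restricted metric), we have that $\bigcup_{i=1}^{n+m} \W{i}$ covers $X \cup Y$, each $\W{i}$ is uniformly bounded (since being so is inherited from a superspace), and each $\W{i}$ is $R_i$-disjoint (since the defining inequality $\dist(u,u') > R_i$ is inherited from the ambient metric of $X$ or $Y$). Hence $X \cup Y$ has asymptotic property C.

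There is essentially no obstacle to overcome, which is precisely why this proposition is easy while the analogous statement for direct products (Theorem~\ref{thm:direct product}) is hard: in a union one can process the two factors \emph{sequentially} with an index shift, whereas in a product the two coordinates must be dealt with \emph{simultaneously} and one cannot simply concatenate witnessing sequences. The only point worth stating explicitly in the write-up is that $R$-disjointness and uniform boundedness are preserved under restriction of the ambient metric, which justifies reusing the families $\U{i}$ and $\V{j}$ verbatim as families in $X \cup Y$.
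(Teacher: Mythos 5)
Your proof is correct, and it is exactly the standard concatenation-with-index-shift argument that the paper has in mind when it states this proposition without proof (calling it easy and citing Bell--Moran's more general union theorem). The one point you rightly make explicit---that $R_i$-disjointness and uniform boundedness of families in $X$ or $Y$ persist when the sets are viewed in $X\cup Y$ with the ambient metric---is all that is needed, so nothing is missing.
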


Finally, we mention that the concepts of asymptotic dimension and asymptotic property~C (among others) belong to the realm of so-called coarse geometry. It is possible to define these concepts in the context of the coarse category without reference to a metric, (see \cite{bell-moran-nagorko2016,grave2006,roe2003}); however, we do not pursue this course further in this work.

\begin{definition}
Let $f \colon X\to Y$ be a map of metric spaces. We say that $f$ is

\begin{enumerate}[(i)]
\item \df{effectively proper} if there is some proper (see Definition~\ref{def:proper}), non-decreasing $\rho_1:[0,\infty)\to[0,\infty)$ such that $\rho_1(\dist_X(x,x'))\le \dist_Y(f(x),f(x'))$;
\item \df{uniformly expansive} if there is a non-decreasing $\rho_2:[0,\infty)\to [0,\infty)$ such that $\dist_Y(f(x),f(x'))\le\rho_2(\dist_X(x,x'))$;
\item \df{coarsely uniform embedding} if it is both effectively proper and uniformly expansive;
\item \df{coarsely surjective} if there is some $R$ so that for each $y\in Y$, there is some $x\in X$ for which $\dist_Y(y,f(x))<R$;
\item a \df{coarse equivalence} if it is a coarsely uniform embedding that is coarsely surjective.
\end{enumerate}
\end{definition}

It is straightforward to verify that asymptotic dimension and asymptotic property~C are invariants of coarse equivalence.

\begin{definition}\label{def:proper}
A metric space $(X,\dist)$ is said to be \df{proper} if closed balls are compact. We also say that the metric itself is proper in this case. A map $f:X\to Y$ between topological spaces is called \df{proper} if inverse images of compact sets are compact. 

A metric $d$ on a group $\Gamma$ will be called \df{left-invariant} if $\dist(gh,gh')=\dist(h,h')$ for all $g,\,h,$ and $h'$ in $\Gamma$.
\end{definition}

Let $\Gamma$ be a finitely generated group. Fix a finite generating set $S$ that is symmetric in the sense that $s\in S$ implies $s^{-1}\in S$. The pair $(\Gamma, S)$ carries a natural proper left-invariant metric called the word metric. This metric is given by first defining a norm on $\Gamma$ as follows: $\|g\|_S=0$ if and only if $g$ is the identity element; otherwise $\|g\|_S$ is the length of the shortest word in the alphabet $S$ that presents the element $g$. The (left-invariant) \df{word metric} is given by $\dist_S(g,h)=\|g^{-1}h\|_S$. It is easy to see that any two word metrics corresponding to different (finite) generating sets give rise to coarsely equivalent spaces. In the case of a general countable group $\Gamma$, which may not be finitely generated, Dranishnikov and Smith showed that up to coarse equivalence there is a unique left-invariant proper metric on $\Gamma$~\cite[Proposition 1.1]{dranishnikov-smith-2006}. Moreover, each such metric arises from a weight function as follows.

\begin{definition}
Let $\Gamma$ be a countable group with (possibly infinite) generating set $S$. Give $S$ the discrete topology. A \df{weight function} on $S$ is a map $w\colon S\to [0,\infty)$ that is positive, proper, and has the property that $w(s)=w(s^{-1})$. 
\end{definition}

\begin{theorem}\cite[Proposition 1.3]{dranishnikov-smith-2006}
Every weight function $w$ on a countable group defines a proper norm (which induces a proper left-invariant metric). The norm is given by 
\[\|\gamma\|_w=\inf\left\{\sum_{i=1}^nw(s_i)\colon \gamma=s_1\cdots s_n,\, s_i\in S\right\}\] and the corresponding metric is $\dist_w(g,h)=\|g^{-1}h\|_w$.
\end{theorem}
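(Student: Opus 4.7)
The plan is to verify in turn that $\|\cdot\|_w$ is a norm, that $\dist_w$ is a left-invariant metric, and that $\dist_w$ is proper, with properness doing the bulk of the work. Because $S$ generates $\Gamma$, each $\gamma$ admits some expression $\gamma = s_1 \cdots s_n$, so the infimum defining $\|\gamma\|_w$ is taken over a non-empty subset of $[0,\infty)$. The identity has $\|e\|_w = 0$ via the empty product; subadditivity $\|\gamma \gamma'\|_w \leq \|\gamma\|_w + \|\gamma'\|_w$ comes from concatenating $\epsilon$-optimal expressions; and symmetry $\|\gamma^{-1}\|_w = \|\gamma\|_w$ follows from reversing an expression $\gamma = s_1 \cdots s_n$ to obtain $\gamma^{-1} = s_n^{-1} \cdots s_1^{-1}$ and invoking $w(s) = w(s^{-1})$. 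With these in hand, the formula $\dist_w(g,h) = \|g^{-1}h\|_w$ is automatically a left-invariant pseudo-metric, left-invariance amounting to the identity $(gg')^{-1}(gh) = (g')^{-1}h$.

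The substantive step is properness of $\dist_w$, which I would prove in the form that every closed ball is finite; identity of indiscernibles will drop out of the same argument. Fix $R > 0$. Properness of $w$ makes
\[ T := \{s \in S : w(s) \leq R+1\} \]
finite, and positivity of $w$ makes $m := \min\{w(s) : s \in T\}$ strictly positive (set $m := 1$ if $T$ is empty). If $\|\gamma\|_w \leq R$, choose an expression $\gamma = s_1 \cdots s_n$ with $\sum_i w(s_i) < R+1$. Then every $s_i$ lies in $T$ and $nm \leq \sum_i w(s_i) < R+1$, so $n < (R+1)/m$. Hence the closed ball of radius $R$ about $e$ is contained in the finite set of products of at most $\lfloor (R+1)/m \rfloor$ letters from $T$, and by left-invariance the same holds at every point. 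Applying this with $R$ smaller than the smallest $w(s)$ over $s \in T$ forces $n = 0$ and hence $\|\gamma\|_w = 0 \Rightarrow \gamma = e$, completing the norm axioms.

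The main obstacle — modest in absolute terms but the only step that uses more than bookkeeping — is the last paragraph: neither hypothesis on $w$ suffices alone. Positivity by itself allows infinitely many generators of arbitrarily small weight and hence arbitrarily long near-optimal expressions; properness by itself bounds the alphabet $T$ but without a positive lower bound on weights within $T$ does not bound the length. The two hypotheses in concert produce the uniform constant $m > 0$ that controls both the alphabet and the length of any expression of norm at most $R$, and it is exactly this simultaneous control that underpins both the properness of $\dist_w$ and the identity of indiscernibles for $\|\cdot\|_w$.
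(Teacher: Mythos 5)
The paper itself does not prove this statement---it is imported verbatim from Dranishnikov--Smith \cite{dranishnikov-smith-2006}---so there is no internal proof to compare against; your argument is the standard one for that result and is essentially correct. The norm axioms, left-invariance, and the main properness step are fine: properness of $w$ makes $T=\{s\in S\colon w(s)\le R+1\}$ finite, positivity gives $m=\min_{s\in T}w(s)>0$, and any expression with $\sum_i w(s_i)<R+1$ then uses letters from $T$ only and has length at most $(R+1)/m$, so closed balls are finite and hence compact.

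The one place that needs repair is the final sentence on identity of indiscernibles. As written, ``applying this with $R$ smaller than the smallest $w(s)$ over $s\in T$'' is circular, since $T$ was defined in terms of $R$; and even with $T$ fixed, your chosen expression only satisfies $\sum_i w(s_i)<R+1$, giving $n<(R+1)/m$, which does not force $n=0$ because of the $+1$ slack. The fix is a one-liner using the freedom you actually have when $\|\gamma\|_w=0$: set $T=\{s\in S\colon w(s)\le 1\}$ and $m=\min_{s\in T}w(s)>0$, and choose an expression $\gamma=s_1\cdots s_n$ with $\sum_i w(s_i)<\min(m,1)$. Every letter would then lie in $T$ and yet have weight strictly less than $m$, which is impossible, so $n=0$ and $\gamma=e$. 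With that adjustment the proof is complete and matches the argument in the cited source.
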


\begin{definition}\label{def:R-stab}
An \df{action} of a group $\Gamma$ on a topological space $X$ is a homomorphism $\varphi:\Gamma\to Homeo(X)$. We denote $\varphi(\gamma)(x)$ by $\gamma.x$. The \df{orbit} of $x_0\in X$ is the equivalence class of all $\{x\in X\colon \exists_{g\in \Gamma}\, x=g.x_0\}$. The action is \df{transitive} if there is only one orbit. For $x\in X$, the \df{stabilizer} of $x$ is denoted $\Gamma_x$ and is the subgroup of $\Gamma$ defined by $\Gamma_x=\{\gamma\in \Gamma\colon \gamma.x=x\}$. Let $\Gamma$ be a group acting on a metric space $X$. The action of $\Gamma$ on $X$ is said to be an \df{action by isometries} (or an \df{isometric action}) if $\varphi(\Gamma)\subset Isom(X)$. 
For $R>0$ define the \df{$R$-stabilizer of $x_0\in X$} by $W_R(x_0)=\{\gamma\in\Gamma\colon \dist_X(\gamma.x_0,x_0)\}\le R$.
\end{definition}


\section{Direct products preserve asymptotic property~C}

Several authors have studied the so-called permanence properties -- that is, the extent to which properties are preserved by unions, products, etc -- of coarse invariants such as finite asymptotic dimension~\cite{bell-dranishnikov2001}, finite decomposition complexity~\cite{guentner2014,dydak-virk2016}, or property A~\cite{bell2003}. The permanence properties of asymptotic property~C have proved to be slightly more elusive than others, with incremental special cases being proved for unions ~\cite{bell-moran2015} and certain types of group extensions and products~\cite{beckhardt2015,beckhardt-goldfarb2016}. Indeed, although it had been conjectured for some time, only recently was a group with infinite asymptotic dimension and asymptotic property~C shown to exist~\cite{yamauchi2015}.

Let $(X,\dist_X)$ and $(Y,\dist_Y)$ be metric spaces. We can define a metric on the product $X\times Y$ by $\dist_{X\times Y}(x\times y, x'\times y')=\sqrt{\left(\dist_X(x,x')\right)^2+\left(\dist_Y(y,y')\right)^2}$.

The main goal of this section is to answer the following question from the Lviv Topological Seminar: 
\cite[Question 1.3, p.~656]{openproblems} Let $X$ and $Y$ be two metric spaces with asymptotic property~C. Does $X \times Y$ have asymptotic property~C?

\begin{theorem}\label{thm:direct-product}
Let $X$ and $Y$ be metric spaces with asymptotic property~C. Then, $X\times Y$ has asymptotic property~C.
\end{theorem}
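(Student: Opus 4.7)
My plan is to cover $X \times Y$ by product families $\{W \times V\}$, taking $W$ from families produced by asymptotic property~C of $X$ and $V$ from families produced by asymptotic property~C of $Y$, arranged so that the $\ell$-th family is $R_\ell$-disjoint.  The obvious attempt---apply asymptotic property~C of $X$ once and of $Y$ once, then take all $km$ product families---would fail, because a product family indexed by a pair $(i,j)$ is only $\min(R_i, R_j)$-disjoint, while enumerating $km$ pairs forces many slots $\ell$ with $\ell \gg \min(i,j)$, breaking the required $R_\ell$-disjointness.  Instead, I would apply asymptotic property~C of $X$ afresh inside each $Y$-layer, using a shifted tail of $(R_\ell)$, and feed asymptotic property~C of $Y$ a sequence large enough that $\mathcal{V}_j$ dominates every slot its $X$-block occupies.

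First, I would set $N_0 = 0$ and define inductively, for each $j \ge 1$: apply asymptotic property~C of $X$ to the tail $(R_{N_{j-1}+i})_{i \ge 1}$, obtaining families $\mathcal{W}^{(j)}_1, \ldots, \mathcal{W}^{(j)}_{n_j}$ whose union covers $X$, with $\mathcal{W}^{(j)}_i$ being $R_{N_{j-1}+i}$-disjoint and uniformly bounded; set $N_j := N_{j-1}+n_j$.  This yields a well-defined non-decreasing sequence $T_j := R_{N_j}$.  Next, I would apply asymptotic property~C of $Y$ to $(T_j)_{j \ge 1}$, obtaining a finite list $\mathcal{V}_1, \ldots, \mathcal{V}_m$ whose union covers $Y$ with $\mathcal{V}_j$ being $T_j$-disjoint and uniformly bounded.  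For each $j \le m$ and $i \le n_j$, I would place the product family $\mathcal{U}_{j,i} = \{W \times V : W \in \mathcal{W}^{(j)}_i,\ V \in \mathcal{V}_j\}$ in slot $N_{j-1}+i$ of the final enumeration, producing $N_m$ families in total.

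The three checks are routine.  For covering, any $(x,y)$ lies in some $V \in \mathcal{V}_j$ and some $W \in \mathcal{W}^{(j)}_i$, hence in $W \times V \in \mathcal{U}_{j,i}$.  Boundedness of $\mathcal{U}_{j,i}$ follows from the product metric formula applied to the uniform bounds on $\mathcal{W}^{(j)}_i$ and $\mathcal{V}_j$.  For disjointness at slot $\ell = N_{j-1}+i$, two distinct products in $\mathcal{U}_{j,i}$ must either differ in their $W$-coordinate, giving $d_X > R_{N_{j-1}+i} = R_\ell$, or share a $W$ but differ in $V$, giving $d_Y > T_j = R_{N_j} \ge R_{N_{j-1}+i} = R_\ell$ by $i \le n_j$ and monotonicity; in either case $d_{X \times Y} \ge \max(d_X, d_Y) > R_\ell$.

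The main obstacle, and the reason for the full inductive setup, is precisely the coupling needed to make all of this consistent: the disjointness $\mathcal{V}_j$ must furnish depends on $n_j$, but $n_j$ is itself the length of asymptotic property~C of $X$ applied to the tail of $(R_\ell)$ starting at $N_{j-1}$.  Defining all $(n_j)$, $(N_j)$, and $(T_j)$ before ever invoking asymptotic property~C of $Y$ breaks this circularity and forces the key inequality $R_{N_j} \ge R_{N_{j-1}+i}$ for every $i \le n_j$; without it the $Y$-factor would not cover the slots its own $X$-block occupies, and the scheme would collapse.
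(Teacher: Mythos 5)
Your proof is correct and follows essentially the same strategy as the paper's: apply asymptotic property~C of $X$ repeatedly to pieces of the given sequence, feed the resulting block maxima (your $T_j=R_{N_j}$, the paper's $R_{i,n_i}$) to asymptotic property~C of $Y$, and take product families whose $R_\ell$-disjointness is guaranteed because the $Y$-scale dominates every slot in its $X$-block. The only difference is bookkeeping: the paper pre-partitions the indices by a diagonal rearrangement and maps the families back at the end (leaving some slots empty and checking $k_i\ge i$), whereas your adaptive consecutive blocks $N_{j-1}+1,\dots,N_j$ fill every slot directly --- a minor, arguably cleaner, variation of the same argument.
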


The idea for the proof was inspired by the proof that a product of a compact space with (topological) property~C and a space with property~C has property~C~\cite[Theorem 3]{rohm1990}.

\begin{proof}
Let $R_1\le R_2\le \cdots$ be a given non-decreasing sequence of positive numbers. Rearrange the sequence into subsequences $R_{i,1}\le R_{i,2}\le\cdots$ using the rearrangement shown in Figure~\ref{fig:rearrangement}.


\begin{figure}
\begin{center}
\includegraphics[width=.9\textwidth]{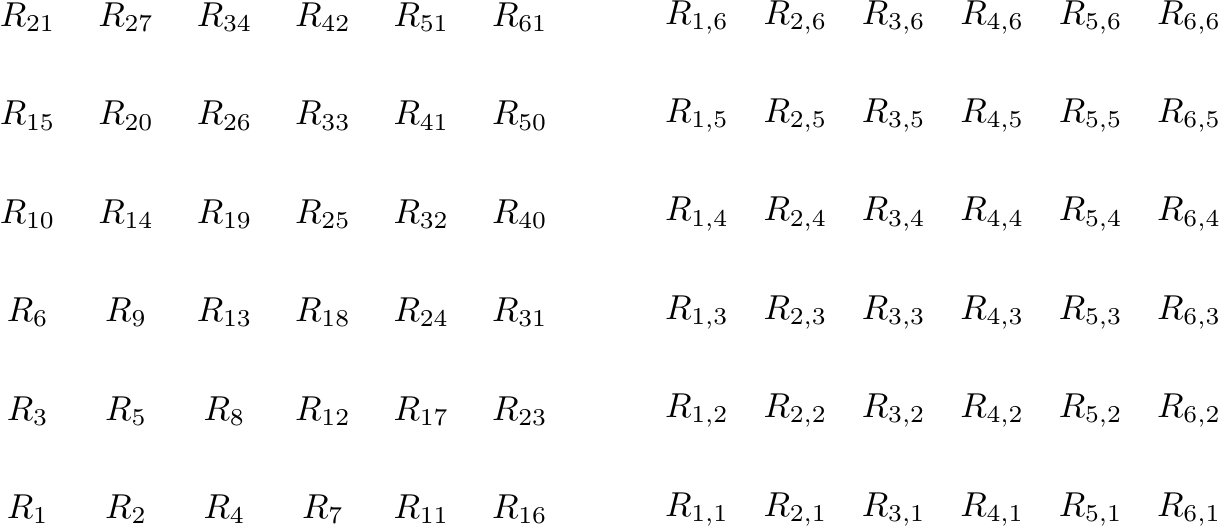}
\end{center}
\caption{We arrange the given sequence as indicated and assign new labels corresponding to the row and column in which the number now appears.}\label{fig:rearrangement}
\end{figure}

\begin{figure}
\includegraphics[width=.9\textwidth]{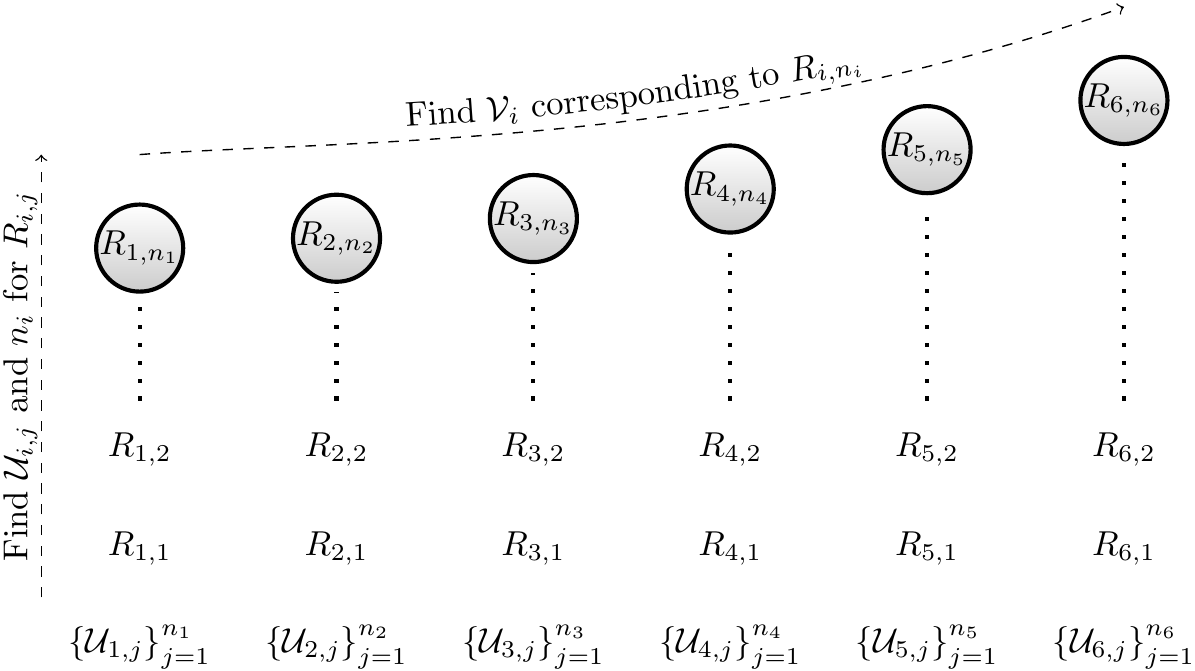}
\caption{We find covers $\{\U{i,j}\}_{j=1}^{n_i}$ of $X$ for each column $R_{i,1},R_{i,2},\ldots,$ and then construct a cover $\{\V{i}\}_{i=1}^m$ of $Y$ corresponding to $R_{i,n_i}$}\label{fig:U and V}
\end{figure}

Fix $i=1$ so that we are considering the column $R_{1,j}$ with $j=1,2,\ldots$. Let $\U{1,1}, \U{1,2},\ldots, \U{1,n_1}$ be uniformly bounded families of subsets of $X$ so that  $\U{1,j}$ is $R_{1,j}$-disjoint and the union $\bigcup_j\U{1,j}$ covers $X$. Similarly, for each $i=2,3,\ldots$, we construct finite collections of $R_{i,j}$-disjoint families $\U{i,j}$ ($j=1,2,\ldots,n_i$) of uniformly bounded subsets of $X$ whose union covers $X$. This construction yields a sequence of positive numbers $R_{i,n_i}$ with the property that $R_{i,n_i}\ge R_{i,j}$ for all $j\le n_i$. Using this sequence we obtain covers $\V{i}$ ($i=1,2,\ldots,m$) of $Y$, see Figure~\ref{fig:U and V}.

Let $\W{i,j}=\{U\times V\colon U\in \U{i,j}, V\in\V{i}\}$.

Given $(x,y)\in X\times Y$ there is some $i_0$ so that $\V{i_0}$ contains an element $V$ containing $y$ since the union of the $\V{i}$ covers $Y$. Then, there is some $j_0$ so that $\U{i_0,j_0}$ contains a set $U$ containing $x$, since each $\bigcup_j\U{i,j}$ is a cover of $X$ for each $i$. Thus, $U\times V$ is some $W\in\W{i_0,j_0}$ that contains $(x,y)$. Thus, the collection $\bigcup_{i,j}\W{i,j}$ covers $X\times Y$. 

The family $\W{i,j}$ is uniformly bounded; indeed, if $W\in\W{i,j}$ then $\diam(W)^2\le \mesh(\U{i,j})^2+\mesh(\V{i})^2$.

Next, we check that each family $\W{i,j}$ is $R_{i,j}$-disjoint; we suppose that distinct elements $W$ and $W'$ are in some fixed $\W{i,j}$. Write $W=U\times V$ and $W'=U'\times V'$. Take arbitrary elements $(u,v)\in W$ and $(u',v')\in W'$. If $U=U'$ then $\dist((u,v),(u',v'))\ge \dist_Y(v,v')> R_{i,n_i}$. If $V=V'$, then $\dist((u,v),(u',v'))\ge\dist_X(u,u')> R_{i,j}$. Otherwise, $\dist((u,v),(u',v'))\ge \min\{\dist_X(u,u'),\dist_Y(v,v')\} > \min\{R_{i,n_i},R_{i,j}\}=R_{i,j}$ since $R_{i,j}\le R_{i,n_i}$ for each $i$ and for each $j\le n_i$. Thus, $\W{i,j}$ is $R_{i,j}$-disjoint.

Finally, we rearrange the $\W{i,j}$ back into a single sequence as in Figure~\ref{fig:rearrangement} to obtain $\W{k_1}, \W{k_2},\ldots, \W{k_N}$ (where some indices may have been omitted). Notice that this produces a finite sequence $\{k_1,k_2,\ldots,k_N\}$ with the property that $k_i\ge i$ for each $i$. Thus, $R_{k_i}\ge R_i$ for each $i$. Thus, the collection $\W{k_i}$ is $R_{k_i}$-disjoint and therefore it is $R_i$ disjoint. Therefore, we have a finite collection $\W{k_1},\W{k_2},\ldots, \W{k_N}$ of uniformly bounded $R_i$-disjoint families of subsets of $X\times Y$ with the property that $\bigcup\W{k_i}$ covers $X\times Y$. 
\end{proof}


\section{The Fibering Lemma}

In this section we prove a Hurewicz-type theorem for asymptotic property~C that is motivated by the technique of the proof of Theorem~\ref{thm:direct-product}. The classical Hurewicz theorem on mappings that lower dimension goes back to 1927 and can be stated as follows:

\begin{hurewicz theorem}\cite[Theorem 1.12.4]{engelking1995}
If $f\colon X\to Y$ is a closed mapping of separable metric spaces and there is an integer $k\ge 0$ such that $\dim f^{-1}(y)\le k$ for every $y\in Y$, then $\dim X\le \dim Y+k$.
\end{hurewicz theorem}

This theorem was first translated to asymptotic dimension for finitely generated groups in~\cite{b-d-hurewicz}. It was proved for countable groups in~\cite{dranishnikov-smith-2006} and translated to Assouad-Nagata dimension in~\cite{BDLM}. A version of the theorem for metric spaces appears in~\cite{dydak-virk2016}. In a more general context, this theorem belongs to the class of fibering theorems, (see, for example~\cite{guentner2014}) in which a property is preserved by a mapping when the range and fibers have this property.

\subsection{Fibers with uniform asymptotic property~C}

\begin{definition}\label{def:coarse fibers}
  Let $f \colon X \to Y$ be a map of metric spaces.
  Let $M$ be a real number.   
  We say that $A \subset X$ is a \df{coarse fiber at scale $M$} 
    if $\diam f(A) < M$.
\end{definition}

\begin{definition}\label{def:apc fibers}
  Let $f \colon X \to Y$ be a map of metric spaces.
  We say that \df{coarse fibers of $f$ have uniform asymptotic property~C} if 
  for each infinite sequence $R_1, R_2, R_3, \ldots$ of real numbers
  there exists an integer $k$ such that 
  for each real number~$M$ 
  there exists real number~$B$ such that 
  for every $A \subset X$ with $\diam f(A) < M$ 
  there exists a sequence $\U{1}, \U{2}, \ldots, \U{k}$ of families of subsets of $A$ that satisfies the following conditions
  \begin{enumerate}
  \item $\bigcup_{i = 1}^k \U{i}$ covers $A$,
  \item for each $i$, $\U{i}$ is $R_i$-disjoint,
  \item for each $i$, $\mesh \U{i} \leq B$.
  \end{enumerate}
\end{definition}

\begin{comm}\label{comm:apc fibers}
  Note the order of quantifiers in Definition~\ref{def:apc fibers}: 
    the number $k$ of families depends only on sequence $R_1, R_2, \ldots$; 
    the bound~$B$ on meshes of the families depends on the sequence $R_1, R_2, \ldots$, the number $k$, and the scale $M$ of fibers.
\end{comm}

If $f:X\times Y\to Y$ is the projection map from the product of metric spaces $X\times Y$, then it is straightforward to show that coarse fibers of $f$ have uniform asymptotic property C if $X$ has asymptotic property C. 

\begin{fibering lemma}
  If $f \colon X \to Y$ is 
  a uniformly expansive map of metric spaces, 
  $Y$ has asymptotic property~C, and 
  coarse fibers of~$f$ have uniform asymptotic property~C,
  then $X$ has asymptotic property~C.
\end{fibering lemma}

It is known that topological property~C is preserved by pre-images if the map is closed, fibers have topological property~C and the codomain is paracompact, see~\cite{hattori-yamada1989}.

\begin{proof}
  Let $\rho \colon [0,\infty) \to [0,\infty)$ be strictly increasing
    such that $\dist_Y(f(x),f(x'))\le\rho(\dist_X(x,x'))$.
  Let $R_1, R_2, R_3, \ldots$ be an infinite sequence of real numbers.
  Without any loss of generality we may assume that $R_1, R_2, R_3, \ldots$
    is increasing.
  
  Rearrange the sequence $R_1 \leq R_2 \leq R_3 \leq \ldots$ into subsequences $R_{i,1} \leq R_{i,2} \leq R_{i,3} \leq \ldots$ using the rearrangement shown in Figure~\ref{fig:rearrangement}.
  For each $i$ let $n_i$ be the number of families obtained by applying Definition~\ref{def:apc fibers} to the map $f$ and the sequence $R_{i,1} \leq R_{i,2} \leq R_{i,3} \leq \cdots$.
  Consider the sequence
  \[
    \rho(R_{1, n_1}), \rho(R_{2, n_2}), \rho(R_{3, n_3}), \ldots
  \]
  Using the assumption that $Y$ has asymptotic property~C we obtain a sequence
  $\V{1}, \V{2},\ldots, \V{m}$ of families of subsets of $Y$ such that
  \begin{enumerate}
  \item $\bigcup_{i=1}^m \V{i}$ covers $Y$.
  \item for each $i$, $\V{i}$ is $\rho(R_{i, n_i})$-disjoint.
  \item for each $i$, $\V{i}$ is uniformly bounded.
  \end{enumerate}
  Let $M_i = \mesh \V{i}$. 
  Apply Definition~\ref{def:apc fibers} to the map $f$, the sequence $R_{i,1} \leq R_{i, 2} \leq \cdots \leq R_{i, n_i}$, and the scale $M_i$ to obtain a bound $B_i$.
  
  Let $V \in \V{i}$. The set $f^{-1}(V)$ is a coarse fiber of $f$ at scale $M_i$. By Definition~\ref{def:apc fibers} applied to the sequence $R_{i,1} \leq R_{i, 2} \leq \cdots \leq R_{i, n_i}$ and the coarse fiber $f^{-1}(V)$ at scale $M_i$, there exists a sequence $\mathcal{U}_1^V, \mathcal{U}_1^V, \ldots, \mathcal{U}_{n_i}^V$ of families of subsets of $f^{-1}(V)$ such that
  \begin{enumerate}
  \item $\bigcup_{j=1}^{n_i} \mathcal{U}_j^V$ covers $f^{-1}(V)$.
  \item for each $j$, $\mathcal{U}_j^V$ is $R_{i, j}$-disjoint.
  \item for each $j$, $\mesh \mathcal{U}_j^V \leq B_i$.
  \end{enumerate}

Observe that if $V, W \in \V{i}$, if $v\in V$, $w\in W$, and $V \neq W$, then $\dist_X (f^{-1}(v), f^{-1}(w)) > R_{i, n_i}$. Since $R_{i, n_i} \geq R_{i, j}$ for $j \leq n_i$, the family
\[
\U{i,j} = \bigcup_{V \in \V{i}} \mathcal{U}^V_j
\]
is $R_{i, j}$-disjoint and $\mesh \U{i, j} < B_i$.
Hence we obtain a collection of uniformly bounded families $\{ \U{i,j} \colon i \leq m, j \leq n_i \}$ that are $R_{i,j}$-disjoint and whose union covers $X$.
Rearranging these families back into the sequence $R_1, R_2, \ldots$ (possibly leaving some places empty) shows that $X$ has asymptotic property~C.
\end{proof}

By the comments following Comment~\ref{comm:apc fibers}, we see that the product theorem (Theorem~\ref{thm:direct-product}) can be deduced from the Fibering Lemma.

\subsection{Fibers with uniformly bounded asymptotic dimension}

We turn our attention to the preservation of coarse properties discussed in \cite{dydak-virk2016}, from which we recall the following definition.

\begin{definition} Given a function $f:X\to Y$ of metric spaces, its \df{asymptotic dimension} $\as(f)$ is the supremum of $\as(A)$ such that $A\subset X$ and $\as(f(A))=0$.
\end{definition}

\begin{lemma}\label{lem:asdim-kernel-spaces}
Let $f\colon X\to Y$ be a uniformly expansive map of metric spaces. The following are equivalent.
\begin{enumerate}
	\item  For every $M>0$ and for every $R>0$ there is a $B>0$ so that if $U\subset X$ and $\diam(f(U))<M$, then $U$ can be covered by $(n+1)$-many $R$-disjoint families of $B$-bounded subsets of $X$.
	\item $\as f \le n$.
\end{enumerate}
\end{lemma}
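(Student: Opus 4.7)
My plan is to prove the two directions separately, with the easier direction $(1)\Rightarrow(2)$ essentially following the fibering pattern used earlier in the paper, and the harder direction $(2)\Rightarrow(1)$ requiring a contradiction argument in which witness sets are assembled into a single subset $A\subset X$ with $\as f(A)=0$.

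For $(1)\Rightarrow(2)$, I would take $A\subset X$ with $\as f(A)=0$, fix $R>0$, and use the uniform expansivity modulus $\rho$ to observe that if the images in $Y$ are $\rho(R)$-disjoint then the preimages in $X$ are $R$-disjoint. Applying $\as f(A)=0$ to the gauge $\rho(R)$ yields a uniformly bounded, $\rho(R)$-disjoint cover $\mathcal{V}$ of $f(A)$ with some mesh bound $M$. Then (1), applied to $M$ and $R$, yields a $B>0$ so that each piece $f^{-1}(V)\cap A$ (a coarse fiber at scale $M$) is covered by $(n+1)$-many $R$-disjoint, $B$-bounded families $\mathcal{U}^V_0,\ldots,\mathcal{U}^V_n$. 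Taking $\mathcal{U}_i=\bigcup_{V\in\mathcal{V}}\mathcal{U}^V_i$ gives $(n+1)$ families that remain $R$-disjoint (thanks to the $\rho(R)$-disjointness of $\mathcal{V}$ and uniform expansivity) and $B$-bounded, covering $A$. Hence $\as A\le n$, so $\as f\le n$.

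For $(2)\Rightarrow(1)$, I would argue by contradiction: suppose (1) fails, so fix $M,R>0$ for which no uniform bound $B$ works. For each integer $k$ choose a witness set $U_k\subset X$ with $\diam f(U_k)<M$ which cannot be covered by $(n+1)$-many $R$-disjoint, $k$-bounded families. I then split into two cases based on whether $\bigcup_k f(U_k)$ is bounded in $Y$. If it is bounded, set $A=\bigcup_k U_k$; then $f(A)$ is bounded, so $\as f(A)=0$. If it is unbounded, I inductively extract a subsequence $k_j$ (with $k_j\ge j$) such that $\dist_Y(f(U_{k_i}),f(U_{k_j}))>j$ for $i<j$ (choose $k_j$ so the image $f(U_{k_j})$ is sufficiently far from a fixed basepoint), and set $A=\bigcup_j U_{k_j}$. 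A standard bookkeeping argument—lumping the finitely many early pieces into one bounded set and leaving the rest as singletons—verifies $\as f(A)=0$ (the mesh of the cover is allowed to depend on the disjointness parameter).

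In either case, (2) yields some $B'>0$ so that $A$ is covered by $(n+1)$-many $R$-disjoint, $B'$-bounded families; restricting these families to any $U_{k_j}$ with $k_j>B'$ produces a cover of $U_{k_j}$ by $(n+1)$-many $R$-disjoint, $B'$-bounded families, which are in particular $k_j$-bounded. This contradicts the choice of $U_{k_j}$. The main obstacle is the construction of $A$ in the unbounded case: one must take care to extract a subsequence of witnesses whose $f$-images spread out fast enough to ensure $\as f(A)=0$, while still retaining arbitrarily large indices $k_j$ to defeat any candidate bound $B'$ produced by (2).
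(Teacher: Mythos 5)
Your proposal is correct and follows essentially the same route as the paper: for $(1)\Rightarrow(2)$ you cover $f(A)$ by suitably disjoint uniformly bounded sets and apply (1) fiberwise before taking unions (your $\rho(R)$-disjointness is the right scale; the paper's ``$\rho^{-1}(R)$'' reads like a slip), and for $(2)\Rightarrow(1)$ you run the same contradiction argument with witness sets, splitting into the bounded and unbounded image cases and building a set $A$ with $\as f(A)=0$ from increasingly separated pieces. The only cosmetic difference is bookkeeping: the paper tracks a minimal-bound function $B(U)$, while you index witnesses $U_k$ by the failed bound $k$, which amounts to the same thing.
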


\begin{proof}
(1) $\implies$ (2): Since $f$ is uniformly expansive there is an increasing control function $\rho:[0,\infty)\to [0,\infty)$ so that $\dist_Y(f(x),f(x'))\le\rho(\dist_X(x,x'))$. Let $R>0$ be given and take an arbitrary $A\subset X$ with the property that $\as f(A)=0$. We may cover $f(A)$ with uniformly bounded sets $U_i$ that are $\rho^{-1} (R)$-disjoint. Let $M$ be an upper bound for $\diam U_i$ and take $B>0$ as above. Then, each $f^{-1}(U_i)$ can be covered by the union of $n+1$ families $\U{i,j}$ of $R$-disjoint, uniformly $B$-bounded subsets of $f^{-1}(U_i)$. By the choice of the $U_i$, we see that if $U_i\neq U_j$ then $\dist(f^{-1}(u),f^{-1}(v))> R$ whenever $u\in U_i$ and $v\in U_j$. Thus, by taking the collection $\V{j}=\U{i,j}$ for $j=0,1,\ldots,n$ we obtain a collection of $(n+1)$-many $R$-disjoint families of uniformly bounded sets whose union covers $A$. Thus, $\as A\le n$ and since $A$ was arbitrary, we conclude that $\as f\le n$.    

(2) $\implies$ (1): Suppose that $\as f\le n.$ Fix $R>0$ and $M>0$. For each $U\subset X$ with $\diam(f(U))< M$, let $B(U)$ denote the smallest integer $B$ for which $U$ can be expressed as a union of $B$-bounded, $R$-disjoint families $\U{0},\ldots,\U{n}$.

If $\sup\{B(U)\colon \diam(f(U))< M\}<\infty$ then there is nothing to prove. Thus, we assume that we can find a sequence of subsets $U_1,U_2,\ldots$ of $X$ with $\diam(f(U_i))<M$ for which $B(U_1), B(U_2),\ldots$ is unbounded. We may take the sequence of such $B(U_i)$ to be monotonically increasing. If the set $\{f(U_1), f(U_2), \ldots\}$ is bounded, then $\as\left(\bigcup_i f(U_i)\right)=0$ and therefore the sequence of the $B(U_i)$ is bounded from above. Otherwise, the sequence $f(U_i)$ is unbounded. Thus, we take $i_1=1$. Because the $f(U_i)$ are bounded, yet the sequence $\{f(U_i)\}$ is unbounded, for each $k>1$, we can find an increasing sequence of indices $i_k$ so that $\dist(\bigcup_{j=1}^{k-1}f(U_{i_j}),f(U_k))>2^k$. (Here, the distance between subsets is defined by $\dist(A,B)=\inf\{\dist(a,b)\colon a\in A, b\in B\}$.) Thus, $\bigcup_{k=1}^\infty f(U_{i_j})$ has asymptotic dimension $0$, which contradicts the fact that the $B(U_i)$ increase monotonically to infinity since $\as f\le n$.
\end{proof}

Dydak and Virk have the following theorem regarding such maps and property~C:

\begin{theorem}[Theorem 6.3 \cite{dydak-virk2016}]
Suppose that $f:X\to Y$ is coarsely surjective and $\as(f)=0$. Then, if $Y$ has asymptotic property~C, $X$ has asymptotic property~C.
\end{theorem}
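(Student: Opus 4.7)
The plan is to apply the Fibering Lemma. Since $Y$ already has asymptotic property~C, the only real content is to verify that the coarse fibers of $f$ satisfy Definition~\ref{def:apc fibers}. Assuming $f$ is uniformly expansive (which in the coarse-geometric framework of \cite{dydak-virk2016} is built into being a coarse map; otherwise one adds this as a standing hypothesis), the Fibering Lemma then yields the conclusion immediately.

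The key step is a direct translation of $\as(f)=0$ into the fiber condition via Lemma~\ref{lem:asdim-kernel-spaces}. Given any sequence $R_1 \le R_2 \le \cdots$, I will simply take $k=1$. For an arbitrary scale $M>0$, I apply Lemma~\ref{lem:asdim-kernel-spaces} with $n=0$, with the number $R_1$, and with this $M$, obtaining a bound $B=B(R_1,M)$ such that every $A\subset X$ satisfying $\diam f(A) < M$ admits a cover by a single $R_1$-disjoint family of $B$-bounded sets. Taking $\U{1}$ to be that cover (and declaring $\U{2},\dots,\U{k}$ empty when $k>1$ is used) provides the finite sequence required by Definition~\ref{def:apc fibers}. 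The quantifier order demanded by Comment~\ref{comm:apc fibers} is respected: $k=1$ depends only on the input sequence, while $B$ depends in addition on $M$.

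With uniform asymptotic property~C of coarse fibers verified, the Fibering Lemma applies and produces asymptotic property~C for $X$. Observe that coarse surjectivity plays no role in this deduction: the families $\V{i}$ covering $Y$ still pull back under $f^{-1}$ to a cover of all of $X$, whether or not $f(X)$ is coarsely dense in $Y$. I do not anticipate any genuine obstacle; the whole argument amounts to the remark that ``$\as(f)=0$'' is precisely the assertion that coarse fibers of $f$ are uniformly zero-dimensional, which is the trivial ($k=1$) instance of the uniform fiber hypothesis of the Fibering Lemma.
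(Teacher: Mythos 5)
Your argument is correct and is essentially the paper's own route: the paper handles this statement as the $n=0$ instance of Corollary~\ref{cor:as(f)}, likewise feeding Lemma~\ref{lem:asdim-kernel-spaces} (there with $R_{n+1}$, here with $R_1$ and $k=1$) into the Fibering Lemma, with uniform expansiveness stated as an explicit hypothesis just as you flag. Your side remarks --- that the quantifier order of Definition~\ref{def:apc fibers} is respected and that coarse surjectivity is not actually used --- are accurate and match the paper's treatment.
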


This naturally leads them to the following question, which we are able to settle in the affirmative~\cite[Question 6.4]{dydak-virk2016}: 
suppose $f:X\to Y$ is uniformly expansive, coarsely surjective, and of finite asymptotic dimension $\as(f)$. If $Y$ has asymptotic property~C, does $X$ have asymptotic property~C?

\begin{corollary} \label{cor:as(f)} Suppose that $f:X\to Y$ is a uniformly expansive coarsely surjective map of finite asymptotic dimension $\as(f)$ and that $Y$ has asymptotic property~C. Then, $X$ has asymptotic property~C. 
\end{corollary}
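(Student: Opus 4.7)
The plan is to derive the corollary as a direct application of the Fibering Lemma: since $f$ is already uniformly expansive and $Y$ has asymptotic property~C by hypothesis, the only thing to verify is that the coarse fibers of $f$ have uniform asymptotic property~C in the sense of Definition~\ref{def:apc fibers}. Coarse surjectivity plays no role in this deduction; it is inherited from the statement in \cite{dydak-virk2016} but is not needed for our argument.

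The key step is to use Lemma~\ref{lem:asdim-kernel-spaces} to convert the hypothesis $\as(f) = n < \infty$ into the uniform asymptotic property~C condition on coarse fibers. Given any non-decreasing sequence $R_1 \le R_2 \le \cdots$, I would set $k = n + 1$; this choice depends only on $\as(f)$, as required by Comment~\ref{comm:apc fibers}. Then, for each scale $M > 0$, apply Lemma~\ref{lem:asdim-kernel-spaces} with the value $R = R_{n+1}$ to obtain a bound $B = B(M, R_{n+1})$ such that every $A \subset X$ with $\diam f(A) < M$ decomposes as a union of $n+1$ families $\V{0}, \V{1}, \ldots, \V{n}$ that are $R_{n+1}$-disjoint and $B$-bounded. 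Setting $\U{i} = \V{i-1}$ for $i = 1, \ldots, n+1$ yields families with $\mesh \U{i} \le B$ that are each $R_{n+1}$-disjoint, hence in particular $R_i$-disjoint since $R_i \le R_{n+1}$. This verifies all three clauses of Definition~\ref{def:apc fibers}.

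With coarse fibers of $f$ having uniform asymptotic property~C established, the Fibering Lemma immediately gives that $X$ has asymptotic property~C, completing the corollary. The only subtle point is making sure the quantifier order in Definition~\ref{def:apc fibers} is honored, namely that the number $k$ of families is chosen before the scale $M$; this is automatic here because $k = n + 1$ depends solely on $\as(f)$, whereas the mesh bound $B$ is allowed to depend on both the sequence and $M$. No calculation beyond invoking Lemma~\ref{lem:asdim-kernel-spaces} once per scale is required, so I do not anticipate any serious obstacle in the argument.
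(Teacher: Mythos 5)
Your proposal is correct and matches the paper's own argument essentially verbatim: take $k=\as(f)+1$, apply Lemma~\ref{lem:asdim-kernel-spaces} at the single scale $R_{n+1}$ to get the uniform mesh bound $B(M)$, note the resulting families are $R_i$-disjoint since the sequence is non-decreasing, and invoke the Fibering Lemma. Your remark that coarse surjectivity is not actually used is a correct (and harmless) observation beyond what the paper states.
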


\begin{proof}
Suppose that $\as(f)=n$. Then, given a sequence $R_1\le R_2\le\cdots$, we take $k=n+1$ and apply Lemma~\ref{lem:asdim-kernel-spaces} to $R_{n+1}$ to find that for any $M>0$ there is a $B$ so that whenever $\diam f(A)<M$, we can cover $A$ by $(n+1)$-many $R_{n+1}$-disjoint families $\U{1},\ldots, \U{n+1}$ that are each $B$-bounded. Since the sequence $R_i$ is non-decreasing, we see that this shows that the coarse fibers of $f$ have uniform asymptotic property~C. Thus, by the Fibering Lemma, we conclude that $X$ has asymptotic property~C.
\end{proof}


\section{Fibers of group homomorphisms}

In the assumptions of the Fibering Lemma we impose the asymptotic property~C condition \emph{uniformly} on \emph{coarse} fibers. 
The following example shows that it is indeed necessary.
In the present section we show that for group homomorphisms in some cases the uniformity of the fiber condition is implicit.

\begin{example}
  Let $\{ 0, 1 \}^n$ be the set of vertices of an $n$-dimensional cube endowed with the $\ell_1$-metric.
  The disjoint union $\coprod_{n=1}^\infty \{ 0, 1 \}^n$ can be metrized in such a way that it is a locally finite metric space that fails to have property A~\cite{nowak2007} and hence fails to have asymptotic property~C as well~\cite[Theorem 7.11]{dranish2000}.
  Consider the map $\coprod_{n=1}^\infty \{ 0, 1 \}^n \to \mathbb{Z}$ that collapses $\{ 0, 1 \}^n$ into a single integer $n^2$.
  It is a uniformly expanding map onto a zero-dimensional space.
  Its coarse fibers are all bounded (hence they are zero-dimensional and have asymptotic property~C).
\end{example}

\subsection{Fibers vs coarse fibers}

Throughout this section, $G$ and $H$ denote countable groups endowed with proper left-invariant metrics.

%
%
%
\begin{lemma}\label{lem:group-actions-W_R}
Let $G$ be a countable group with a proper left-invariant metric acting by isometries on a metric space $X$. Let $x_0\in X$ and define $\varphi:G\to X$ by $\varphi(g)=g.x_0$. Fix $n$ and suppose that for every $R>0$, the $R$-stabilizer (see Definition \ref{def:R-stab}) satisfies $\as W_R(x_0)\le n$. Then coarse fibers of $\varphi$ have uniform asymptotic property~C. 
\end{lemma}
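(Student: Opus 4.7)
The plan is to show that every non-empty coarse fiber of $\varphi$ is contained in a left-translate of the $M$-stabilizer $W_M(x_0)$, and therefore inherits the asymptotic dimension bound $n$ uniformly. The key observation is the following: suppose $A \subset G$ is a coarse fiber at scale $M$, so that $\diam \varphi(A) < M$. Pick any $a_0 \in A$. For every $a \in A$, using that $G$ acts by isometries on $X$, we have
\[
  \dist_X\bigl((a_0^{-1}a).x_0,\, x_0\bigr) = \dist_X(a.x_0, a_0.x_0) < M,
\]
so $a_0^{-1} a \in W_M(x_0)$, hence $A \subset a_0 \cdot W_M(x_0)$. Left-invariance of the metric on $G$ guarantees that left multiplication by $a_0$ is an isometry, so $a_0 \cdot W_M(x_0)$ is isometric to $W_M(x_0)$.

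Given an arbitrary sequence $R_1 \leq R_2 \leq \cdots$ (we may assume monotonicity without loss of generality), I would set $k = n+1$, independent of the sequence. For each $M$, apply the hypothesis $\as W_M(x_0) \leq n$ at scale $R = R_{n+1}$ to obtain a constant $B = B(M)$ and a cover of $W_M(x_0)$ by $n+1$ families $\mathcal{U}'_1, \ldots, \mathcal{U}'_{n+1}$ that are each $R_{n+1}$-disjoint and $B$-bounded. For any non-empty coarse fiber $A$ at scale $M$, fix $a_0 \in A$, translate these families by $a_0$ (an isometry, preserving $R_{n+1}$-disjointness and the bound $B$), and intersect each with $A$. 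Denote the resulting families by $\mathcal{U}_1, \ldots, \mathcal{U}_{n+1}$. Their union covers $A$ (since $A \subset a_0 W_M(x_0)$), each is $B$-bounded, and each is $R_{n+1}$-disjoint, hence $R_i$-disjoint for all $i \leq n+1$ because $R_i \leq R_{n+1}$. Empty coarse fibers are trivial.

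This verifies Definition~\ref{def:apc fibers}: the number $k = n+1$ depends only on the fixed integer $n$ (so in particular only on the sequence, trivially), and the bound $B$ depends on $M$ and on $R_{n+1}$ (hence on the sequence) as required by Comment~\ref{comm:apc fibers}. The main conceptual step is the identification of coarse fibers with left-translates of stabilizers, which reduces a statement about infinitely many fibers to a single asymptotic-dimension estimate on $W_M(x_0)$; the rest is bookkeeping made possible by left-invariance of the metric on $G$ and the isometric nature of the action on $X$.
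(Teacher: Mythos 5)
Your proof is correct and follows essentially the same route as the paper: take $k=n+1$, cover $W_M(x_0)$ at scale $R_{n+1}$ using $\as W_M(x_0)\le n$, and observe that any coarse fiber $A$ at scale $M$ lies in a left-translate $a_0\cdot W_M(x_0)$, so the cover transports to $A$ by the isometry of left multiplication. Your explicit computation $\dist_X\bigl((a_0^{-1}a).x_0,x_0\bigr)=\dist_X(a.x_0,a_0.x_0)<M$ even makes clear that the paper's preliminary reduction to a transitive action is unnecessary.
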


We remark that the $R$-stabilizer is a coarse fiber in the sense of Definition~\ref{def:coarse fibers}.

\begin{proof}
We may assume that the action is transitive. Suppose that a sequence $R_1\le R_2\le\cdots$ is given and take $k=n+1$. Now, if $M>0$ is given we may use the fact that $\as W_M(x_0)\le k-1$ to find families of $R_{k}$-disjoint sets $\U{1},\U{2},\ldots,\U{k}$ so that $\cup_i\U{i}$ covers $W_M(x_0)$ and $\mesh(\U{i})\le B$ for some $B$.

Let $A\subset G$ have the property that $\diam \left(\varphi(A)\right)\le M$. Then, take any $g_A\in G$ and observe that $g_A(x_0)\in \varphi(A)$. Thus $\varphi(A)\subset B_M(g_A.x_0)$ and so $g_A^{-1} A\subset W_M(x_0)$. Since the metric on $G$ is left-invariant, the families $\V{i}$ given by $\{g_A^{-1}U\colon U\in\U{i}\}$ have the property that they are $R_k$-disjoint and $B$-bounded. Moreover, their union covers $A$ as required.
\end{proof}

We actually proved a stronger statement: for each $R$ the coarse fibers at scale $R$ have asymptotic dimension at most $n$ uniformly.

\begin{lemma}\label{lem:group-action-BG-space}
Let $\varphi:G\to X$ describe the transitive action by isometries of a countable discrete group $G$ on a proper discrete metric space $X$. If the stabilizer $G_x$ has finite asymptotic dimension for some $x\in X$, then coarse fibers of the map $\varphi:G\to X$ describing the action have uniform asymptotic property~C.
\end{lemma}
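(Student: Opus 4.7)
The plan is to reduce the statement directly to Lemma~\ref{lem:group-actions-W_R} by showing that, for every $R>0$, the $R$-stabilizer $W_R(x)$ has asymptotic dimension bounded by $n := \as G_x$, with the bound independent of $R$. Once this uniform bound is established, Lemma~\ref{lem:group-actions-W_R} immediately delivers uniform asymptotic property~C of the coarse fibers of $\varphi$.

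The key observation is that since $X$ is proper and discrete as a metric space, every closed ball of $X$ is both compact and discrete, and hence finite. Writing the closed ball $\overline{B_R(x)} = \{y_1, \ldots, y_k\}$ (with $k$ depending on $R$) and choosing, by transitivity, elements $g_i \in G$ with $g_i.x = y_i$, one identifies
\[
  W_R(x) = \{ g \in G : g.x \in \overline{B_R(x)} \} = \bigcup_{i=1}^{k} \{ g \in G : g.x = y_i \} = \bigcup_{i=1}^{k} g_i G_x,
\]
so $W_R(x)$ is a finite union of left cosets of the stabilizer $G_x$. Because the metric on $G$ is left-invariant, left translation by $g_i$ is an isometry, so each coset $g_i G_x$ is isometric to $G_x$ and thus has asymptotic dimension at most $n$. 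Finite-union permanence for asymptotic dimension then yields $\as W_R(x) \le n$ for every $R>0$, with $n$ independent of $R$. Applying Lemma~\ref{lem:group-actions-W_R} finishes the argument.

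The only step requiring any care beyond the cited permanence facts is the finiteness of $\overline{B_R(x)}$, which is an immediate consequence of the joint assumption that $X$ is proper and discrete; I do not expect it to be a genuine obstacle. The whole argument is a brief bookkeeping step that routes through Lemma~\ref{lem:group-actions-W_R}, with all the real work of building covers already done there.
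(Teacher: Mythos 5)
Your argument is correct and is essentially the paper's own proof: both use that properness plus discreteness makes $\overline{B_R(x)}$ finite, hence $W_R(x)$ is a finite union of left cosets isometric to $G_x$, so $\as W_R(x)\le \as G_x$ uniformly in $R$, and then invoke Lemma~\ref{lem:group-actions-W_R}. You simply spell out the coset decomposition and the finite-union permanence that the paper leaves implicit.
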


\begin{proof}
Let $R>0$ be given and consider $B_R(x)\subset X$. Since this set is finite, it follows that $W_R(x)$ is a finite union of sets that are isometric to $G_x$. Hence, this union has the same asymptotic dimension as $G_x$ and the result follows from Lemma~\ref{lem:group-actions-W_R}.
\end{proof}

\begin{lemma}\label{lem:G -> H coarse fibers}
  If the kernel of a group homomorphism $f \colon G \to H$ has finite asymptotic dimension, then coarse fibers of $f$ have uniform asymptotic property~C. 
\end{lemma}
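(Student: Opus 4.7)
The plan is to reduce this to Lemma~\ref{lem:group-actions-W_R} (or equivalently Lemma~\ref{lem:group-action-BG-space}) by exhibiting $f$ as an orbit map of an isometric action whose stabilizer is the kernel. Concretely, let $K = \ker f$ with $\as K = n < \infty$, and define an action of $G$ on $H$ by $g.h = f(g) h$. Left-invariance of the chosen metric on $H$ makes this an action by isometries, and the orbit map at the identity $e_H \in H$ is precisely $f$. The action is transitive on $f(G)$, so passing to $f(G) \subset H$ with the induced (proper, discrete) metric, we are in the setting of the previous lemmas with stabilizer $G_{e_H} = K$.

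Next I would unpack what the $R$-stabilizer looks like in order to apply Lemma~\ref{lem:group-actions-W_R} directly. We have
\[
W_R(e_H) = \{ g \in G : \dist_H(f(g), e_H) \le R \} = f^{-1}\bigl( \bar{B}_R(e_H) \cap f(G) \bigr).
\]
Since $H$ carries a proper metric and is countable, the closed ball $\bar{B}_R(e_H)$ is finite, so $\bar{B}_R(e_H) \cap f(G) = \{h_1, \ldots, h_N\}$ is a finite set. Choosing preimages $g_i \in f^{-1}(h_i)$ gives a decomposition
\[
W_R(e_H) = \bigsqcup_{i=1}^{N} g_i K,
\]
and because the metric on $G$ is left-invariant each coset $g_i K$ is isometric to $K$. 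Finite unions of sets of asymptotic dimension at most $n$ have asymptotic dimension at most $n$, so $\as W_R(e_H) \le n$ for every $R > 0$.

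Finally, I would invoke Lemma~\ref{lem:group-actions-W_R} (applied to the action of $G$ on the discrete proper metric space $f(G)$, with $x_0 = e_H$) to conclude that coarse fibers of the orbit map, which coincides with $f$, have uniform asymptotic property~C. The stronger statement recorded after Lemma~\ref{lem:group-actions-W_R} in fact gives that these coarse fibers have asymptotic dimension at most $n$ uniformly at every scale.

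There is no real obstacle here; the only point requiring mild care is identifying $W_R(e_H)$ with a finite union of cosets of $K$, which relies crucially on the metric on $H$ being \emph{proper} so that $R$-balls are finite. If one wished to avoid Lemma~\ref{lem:group-actions-W_R}, an equally short route is to apply Lemma~\ref{lem:group-action-BG-space} directly, since the stabilizer $K$ has finite asymptotic dimension by hypothesis.
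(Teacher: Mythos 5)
Your proposal is correct and follows essentially the same route as the paper: the paper also defines the action $g.h = f(g)h$, observes it is isometric with stabilizer $\ker f$, and concludes via Lemma~\ref{lem:group-action-BG-space} (whose proof is exactly your coset decomposition of $W_R(e_H)$ into finitely many copies of $K$) together with Lemma~\ref{lem:group-actions-W_R}. Your only additions are inlining that coset argument and explicitly restricting to the orbit $f(G)$ when $f$ is not surjective, a point the paper glosses over.
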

We do not know whether the weaker assumption that the kernel of $f$ has asymptotic property~C is sufficient, see Question~\ref{q:apc fiber}.

\begin{proof} Define an action of $G$ on $H$ by $g.h\mapsto f(g)h$. Notice that this action is isometric; i.e., $\dist(g.h,g.h')=\dist(h,h')$ for all $g\in G$. Also, the stabilizer of this action at $e$ is $\ker f$. Since $H$ is assumed to be a countable group with a proper metric, as a metric space it has bounded geometry and the result follows from Lemma~\ref{lem:group-action-BG-space}.
\end{proof}

\begin{lemma}
  If the kernel of a projection $p \colon G \times H \to G$ has asymptotic property~C, then coarse fibers of $p$ have uniform asymptotic property~C.
\end{lemma}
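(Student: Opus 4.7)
The plan is to observe that the kernel of $p \colon G \times H \to G$ is precisely $\{e_G\} \times H$, which is isometric to $H$, so the hypothesis amounts to saying that $H$ has asymptotic property~C. I would then use the families provided by asymptotic property~C of $H$, pulled up to $G \times H$ as ``horizontal slabs,'' and intersect them with a given coarse fiber. The fact that a coarse fiber has small image in $G$ will be what bounds the diameter in the $G$-direction.

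In detail: fix a sequence $R_1, R_2, \ldots$. Applying asymptotic property~C of $H$ to this sequence yields $k$ and uniformly bounded families $\V{1}, \ldots, \V{k}$ of subsets of $H$ with $\V{i}$ being $R_i$-disjoint and $\mesh \V{i} \leq B_i'$, where $k$ and the $B_i'$ depend only on the sequence. Given any scale $M$, set $B = \max_{i \le k} \sqrt{M^2 + (B_i')^2}$ (or the analogue for whichever metric is used on the product). Then for any $A \subset G \times H$ with $\diam p(A) < M$, define
\[
  \U{i} = \bigl\{\, A \cap (G \times V) \colon V \in \V{i} \,\bigr\}.
\]

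The three required properties follow immediately: coverage holds because $\bigcup_i \V{i}$ covers $H$, so each $(g,h) \in A$ lies in $G \times V$ for some $V \in \V{i}$; the $R_i$-disjointness follows since two points in distinct members of $\U{i}$ have $H$-coordinates in distinct members of $\V{i}$, forcing distance greater than $R_i$ in the product metric; and the bound on $\mesh \U{i}$ comes from combining $\diam p(A) < M$ in the $G$-coordinate with $\mesh \V{i} \leq B_i'$ in the $H$-coordinate, giving diameter at most $B$. The only genuine care needed is to respect the order of quantifiers in Definition~\ref{def:apc fibers}: the number $k$ of families must be chosen up front from the sequence alone (this is fine since $H$ supplies it), while $B$ is allowed to depend on $M$ (which is where the $G$-diameter enters). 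There is no real obstacle here, and the argument is a direct analog of the product construction used in Theorem~\ref{thm:direct-product}, specialized to the fiberwise setting demanded by the Fibering Lemma.
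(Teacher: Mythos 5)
Your proposal is correct and follows essentially the same route as the paper: identify $\ker p = \{e\}\times H$ with $H$, take the asymptotic property~C families for $H$ from the given sequence, and bound the $G$-direction of any coarse fiber by $M$, with $B$ depending on $M$ exactly as the quantifiers in Definition~\ref{def:apc fibers} allow. The only cosmetic difference is that you intersect $A$ with the slabs $G\times V$ (so your families are literally subsets of $A$), whereas the paper covers $A$ by sets of the form $B^G(g,M)\times U$; the estimates are the same.
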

\begin{proof} We observe that the kernel of $p$ is $\{e\}\times H$, which is isometric to $H$. Let $R_1\le R_2\le\cdots $ be a given sequence. Take families $\U{1},\U{2},\ldots,\U{k}$ of subsets of $H$ so that $\U{i}$ is $R_i$-disjoint, so that $\mesh\U{i}$ is finite for each $i$, and so that $\cup_i\U{i}$ covers $H$. Given $M>0$ we take $B=M+\max_i\{\mesh\U{i}\}$. If $A\subset G\times H$ has the property that $\diam(p(A))\le M$, then there is some $g\in G$ so that $p(A)\subset B^G(g, M)$, where the superscript indicates that the $M$-ball is taken in the group $G$. Consider the collection $\V{i}$ of subsets of $G\times H$ given by $\{B^G(g, M)\times U\colon U\in\U{i}\}$. Observe that this collection covers $A$, $\mesh\V{i}\le \mesh\U{i}+M\le B$, and each $\V{i}$ is $R_i$-disjoint.
\end{proof}

We would like to remark that we did not require the bounded geometry condition on $G$ or $H$ in the previous proof. Indeed, the same technique can be used to show that if point fibers of a projection $p\colon X\times Y\to X$ have asymptotic property~C, then coarse fibers of $p$ have uniform asymptotic property~C.




In \cite{beckhardt2015} it is shown that a finitely generated group acting on a space with asymptotic property~C will have asymptotic property~C provided the $R$-stabilizers have finite asymptotic dimension. We can use our fibering theorem to recover this result, which we state for arbitrary countable groups with proper left-invariant metrics.

\begin{theorem}\label{thm:group-actions}
Let $G$ be a countable group with a proper left-invariant metric acting by isometries on a metric space $X$. Let $x_0\in X$ and define $\phi:G\to X$ by $\phi(g)=g.x_0$. If for every $R>0$, $\as W_R(x_0)\le n$ and $X$ has asymptotic property~C, then $G$ has asymptotic property~C. 
\end{theorem}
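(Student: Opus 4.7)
The plan is to deduce the theorem directly from the Fibering Lemma applied to $\phi \colon G \to X$. The three hypotheses of the Fibering Lemma must be verified: that $\phi$ is uniformly expansive, that $X$ has asymptotic property~C, and that coarse fibers of $\phi$ have uniform asymptotic property~C. The middle item is given.

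For uniform expansivity, since the action is isometric and $\dist_G$ is left-invariant,
\[
  \dist_X(\phi(g), \phi(g')) = \dist_X(g.x_0, g'.x_0) = \dist_X(x_0, (g^{-1}g').x_0),
\]
which depends only on $\|g^{-1}g'\|_G = \dist_G(g,g')$. Set
\[
  \rho(R) = \sup\{\dist_X(x_0, h.x_0) \colon h \in G,\ \|h\|_G \le R\}.
\]
The supremum is finite because the metric on $G$ is proper, so the ball $\{h \colon \|h\|_G \le R\}$ is compact (hence finite in the countable discrete setting); by construction $\rho$ is non-decreasing and serves as an expansion control function for $\phi$.

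For the fiber condition, the hypothesis $\as W_R(x_0) \le n$ for every $R > 0$ is precisely the assumption of Lemma~\ref{lem:group-actions-W_R}, whose conclusion is that coarse fibers of $\phi$ have uniform asymptotic property~C. Applying the Fibering Lemma then yields that $G$ has asymptotic property~C.

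No real obstacle is anticipated; the theorem is essentially a repackaging of Lemma~\ref{lem:group-actions-W_R} together with the Fibering Lemma. The only minor technical step is confirming uniform expansivity of $\phi$, which reduces to observing that a bounded set in $G$ has $x_0$-orbit of bounded diameter in $X$, a consequence of properness of $\dist_G$ and the isometric action.
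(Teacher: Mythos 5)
Your proposal is correct and follows the paper's own route: reduce to the Fibering Lemma together with Lemma~\ref{lem:group-actions-W_R}, so that the only thing left to verify is uniform expansivity of the orbit map $\phi$. The single point of divergence is that verification: the paper invokes the Dranishnikov--Smith description of the metric as a weighted word metric and bounds $\dist_X(\phi(g),\phi(g'))$ by summing the displacements $\dist_X(s_{i_j}.x_0,x_0)$ over generator sequences of weight at most $\dist_G(g,g')$, whereas you take $\rho(R)=\sup\{\dist_X(x_0,h.x_0)\colon \|h\|_G\le R\}$ directly; this is finite because balls in a countable group with a proper left-invariant metric are finite, a fact the paper itself relies on elsewhere (e.g.\ the bounded-geometry claim in Lemma~\ref{lem:G -> H coarse fibers}), and finiteness rather than mere compactness is what you need, since the orbit map is not assumed continuous. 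Your variant is slightly more elementary in that it avoids the weight-function machinery; both arguments are valid and the rest of your proof coincides with the paper's.
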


\begin{proof}
This will follow from the Fibering Lemma and Lemma \ref{lem:group-actions-W_R} provided we can show that $\phi$ is uniformly expansive.

To this end, by \cite{dranishnikov-smith-2006} we know that the metric on $G$ is a weighted word metric. Therefore, we assume that $S=\{s_i\}$ is a generating set for $G$ with corresponding weights $w(s_i)$. By their definition and the fact that the metric is proper, the weights have the following property. For each fixed $N\in \mathbb{N}$, there are only finitely many (finite) sequences of weights whose sums are bounded from above by $N$; that is, the collection of all sequences $\left(w(s_{i_1},w(s_{i_2}),\ldots \right)$ for which $\sum w(s_{i_j})\le N$ is a finite collection. Thus, it makes sense to define a function $\rho:\mathbb{N}\to \mathbb{N}$ by $\rho(N)=\max\{\sum \dist(s_{i_j}.x_0,x_0)\}$ where the maximum is taken over all sequences of $s_i$ for which $\sum w(s_{i_j})\le N$.

Then, by applying the triangle inequality and the fact that the metric is left-invariant, we find $\dist_X(g'.x_0,g.x_0)=\dist_X(g^{-1}g'.x_0,x_0)\le \sum_{j=1}^k \dist_X(s_{i_j}.x_0,x_0)$, where $g^{-1}g'=s_{i_1}s_{i_2}\cdots s_{i_k}$. But, now $\sum_{j=1}^k \dist_X(s_{i_j}.x_0,x_0)\le \rho(\|g^{-1}g'\|)=\rho(\dist_G(g',g))$ and so we conclude that $\dist_X(\phi(g'),\phi(g))\le\rho(\dist_G(g',g))$. \end{proof}

%

\begin{theorem}\label{thm:surjection}
Let $\phi:G\to H$ be a surjective homomorphism of finitely generated groups; suppose that $H$ has property $C$. If $\as\ker\phi<\infty$
then $G$ has asymptotic property~C.
\end{theorem}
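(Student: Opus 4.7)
The plan is to deduce this theorem as an immediate application of the Fibering Lemma, using Lemma~\ref{lem:G -> H coarse fibers} to handle the fiber condition. To invoke the Fibering Lemma with $f = \phi$, I need three ingredients: (i) $\phi$ is uniformly expansive, (ii) $H$ has asymptotic property~C, and (iii) the coarse fibers of $\phi$ have uniform asymptotic property~C. Ingredient (ii) is a hypothesis of the theorem, and ingredient (iii) follows directly from Lemma~\ref{lem:G -> H coarse fibers}, since $\ker\phi$ has finite asymptotic dimension by assumption.

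The only step that needs actual verification is (i). Since $G$ is finitely generated, fix a finite symmetric generating set $S$ for $G$ and equip $G$ with the corresponding word metric $\dist_G$. Equip $H$ with the word metric $\dist_H$ with respect to any finite symmetric generating set. Set $C = \max_{s \in S} \|\phi(s)\|_H$, which is finite because $S$ is finite. For any $g, g' \in G$, writing $g^{-1}g' = s_1 \cdots s_n$ with $n = \dist_G(g,g')$ and using left-invariance together with the homomorphism property, I get
\[
  \dist_H(\phi(g), \phi(g')) = \|\phi(g^{-1}g')\|_H \le \sum_{i=1}^n \|\phi(s_i)\|_H \le C n = C \cdot \dist_G(g, g').
\]
Hence $\phi$ is $C$-Lipschitz, which is certainly uniformly expansive with control function $\rho_2(t) = Ct$.

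With all three hypotheses in place, the Fibering Lemma yields that $G$ has asymptotic property~C, completing the proof. There is no real obstacle here: the theorem is essentially a corollary of the machinery already developed, and the substantive work has been absorbed into the Fibering Lemma and Lemma~\ref{lem:G -> H coarse fibers}. One could slightly generalize by replacing ``finitely generated'' with ``countable with a proper left-invariant metric'' (as in Theorem~\ref{thm:permanence for groups}(3)) at the cost of replacing the $C$-Lipschitz estimate with the weight-function argument used in the proof of Theorem~\ref{thm:group-actions}.
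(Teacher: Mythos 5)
Your proposal is correct and follows essentially the same route as the paper: both reduce the theorem to the Fibering Lemma together with Lemma~\ref{lem:G -> H coarse fibers}, with the only work being the verification that $\phi$ is uniformly expansive. The paper checks this by building a weighted generating set on $G$ lifting one on $H$ and invoking coarse invariance of the metric, while you use finite generating sets and the standard Lipschitz estimate $\dist_H(\phi(g),\phi(g'))\le C\,\dist_G(g,g')$, which is a perfectly adequate (and slightly more elementary) way to obtain the same hypothesis in the finitely generated case.
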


\begin{proof} 
Let $S$ be a generating set for $H$ with corresponding weights $W$. For each $s\in S$, let $\tilde{s}\in \phi^{-1}(s)$ be some element of $G$. Extend the set $\{\tilde{s}\colon s\in S\}$ to a generating set $T$ for $G$ with the property that the weight of each $\tilde{s}$ is the same as the weight of the corresponding $s$. This yields a left-invariant proper metric on $G$, which is coarsely equivalent to any other choice of left-invariant proper metric on $G$, so it suffices to show that $G$ has asymptotic property~C in this metric. In this metric, $\phi$ is coarsely expansive and so the result follows from the Fibering Lemma and Lemma~\ref{lem:G -> H coarse fibers}.
%
%
%
\end{proof}

\begin{corollary}\ 
If $G$ and $H$ have asymptotic property~C then $G\ast H$ has asymptotic property~C.
\end{corollary}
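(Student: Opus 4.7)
The plan is to deduce this corollary from Theorem~\ref{thm:surjection} applied to the canonical surjective homomorphism $\phi \colon G * H \to G \times H$ induced by the identity maps on each free factor, combined with the direct product theorem (Theorem~\ref{thm:direct product}) for the codomain.

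First, I would equip $G * H$ with a proper left-invariant metric compatible with those on $G$ and $H$. Each of the given metrics arises from a weight function $w_G, w_H$, and setting $w|_G = w_G$ and $w|_H = w_H$ yields a weight function on any chosen generating set for $G * H$ that contains the generators of $G$ and $H$. The induced proper left-invariant metric on $G * H$ (unique up to coarse equivalence) has the feature that $\phi$ is $1$-Lipschitz onto $G \times H$ with, say, the $\ell_1$ product metric; in particular $\phi$ is uniformly expansive.

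Second, I would verify the hypotheses needed to apply Theorem~\ref{thm:surjection}. The codomain $G \times H$ has asymptotic property~C by Theorem~\ref{thm:direct product}. The kernel of $\phi$ is the normal closure of the mutual commutators $[[G, H]]$; by Bass--Serre theory (\cite{serre}), this kernel acts freely on the Bass--Serre tree associated with the free product decomposition of $G * H$, so it is a free group quasi-isometric to a tree and therefore has asymptotic dimension at most $1$. Applying Theorem~\ref{thm:surjection} then yields that $G * H$ has asymptotic property~C.

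The main subtlety, and the one step that requires care, is that Theorem~\ref{thm:surjection} as stated assumes finite generation, while the corollary is intended for arbitrary countable groups with proper left-invariant metrics. To handle the general case I would invoke the Fibering Lemma directly rather than Theorem~\ref{thm:surjection}: the first step above supplies uniform expansiveness of $\phi$, the second step supplies asymptotic property~C of the codomain and finiteness of $\as \ker \phi$, and Lemma~\ref{lem:G -> H coarse fibers} then supplies uniform asymptotic property~C of the coarse fibers of $\phi$. The Fibering Lemma gives the conclusion. Thus the only genuinely non-formal ingredient is the identification of $\ker \phi$ as a subgroup quasi-isometric to a tree, which is the standard Bass--Serre input.
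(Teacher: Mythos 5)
Your proposal is correct and follows essentially the same route as the paper: the exact sequence $1\to[[G,H]]\to G\ast H\to G\times H\to 1$, the identification of the kernel as a free group quasi-isometric to a tree (hence of finite asymptotic dimension), Theorem~\ref{thm:direct product} for the codomain, and Theorem~\ref{thm:surjection} (equivalently the Fibering Lemma combined with Lemma~\ref{lem:G -> H coarse fibers}) to conclude. Your additional care about the finite-generation hypothesis in Theorem~\ref{thm:surjection}---passing directly to the Fibering Lemma with a compatible weighted metric on $G\ast H$---is a sensible refinement of the same argument rather than a different approach.
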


\begin{proof}
We consider the exact sequence 
\[1\to [[G,H]]\to G\ast H\to G\times H\to 1\hbox{.}\]
The commutator group $[[G,H]]$ is free~\cite{serre} and so as a subspace of $G\ast H$ it is quasi-isometric to a tree (albeit one in which edges can have arbitrarily long length). Thus, its asymptotic dimension is $1$. By Theorem~\ref{thm:direct-product}, $G\times H$ has asymptotic property C. We apply Theorem~\ref{thm:surjection} to complete the proof.
\end{proof}


\section{Free products preserve asymptotic property~C}

In the present section we use the Decomposition Lemma to prove that the free product of metric spaces preserves asymptotic property~C; however, the Decomposition Lemma has interesting generalizations that make it interesting in its own right.

\subsection{The Decomposition Lemma}

\begin{lemma}\label{lem:decomposition}
  Let $X$ be a metric space. 
  Assume that there exists $k$ such that for each (infinite) sequence $R_1, R_2, \ldots$ of real numbers there exists a finite sequence $\U1, \U2, \ldots, \U{n}$ of families of subsets of $X$ such that
  \begin{enumerate}
  \item $\bigcup_i \U{i}$ covers $X$,
  \item $\U{i}$ is $R_i$-disjoint,
  \item $\U{i}$ has asymptotic dimension bounded by $k-1$ uniformly.
  \end{enumerate}
  Then $X$ has asymptotic property~C.
\end{lemma}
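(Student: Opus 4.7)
The plan is to use the same rearrangement trick that drives Theorem~\ref{thm:direct-product} and the Fibering Lemma: the families $\U{i}$ supplied by the hypothesis are not yet uniformly bounded, but each one is known to have uniform asymptotic dimension $\le k-1$, so each one can be split into $k$ uniformly bounded pieces. We just need to feed in the scales $R_i$ in the right order so that the resulting doubly indexed sequence $\W{i,j}$ can be flattened into a single sequence that matches the originally given $R_1 \le R_2 \le \cdots$.

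First I would, as usual, assume the input sequence $R_1 \le R_2 \le R_3 \le \cdots$ is non-decreasing. I would apply the hypothesis of the lemma to the subsequence $R_k, R_{2k}, R_{3k}, \ldots$ to obtain a finite list of families $\U{1}, \U{2}, \ldots, \U{n}$ of subsets of $X$ with $\bigcup_i \U{i}$ covering $X$, $\U{i}$ being $R_{ik}$-disjoint, and $\U{i}$ having uniform asymptotic dimension $\le k-1$.

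Next, I would use the uniformity of the asymptotic dimension bound on each $\U{i}$ to produce, for the single scale $R_{ik}$, a constant $B_i$ such that every $U \in \U{i}$ admits a decomposition into $k$ families that are $R_{ik}$-disjoint and $B_i$-bounded. Taking the union over $U \in \U{i}$ of the $j$-th family in this decomposition, I obtain a family $\W{i,j}$ of subsets of $X$. Within a single $U \in \U{i}$ the family inherits $R_{ik}$-disjointness from the asdim decomposition, and sets coming from distinct $U, U' \in \U{i}$ are automatically $R_{ik}$-separated, so $\W{i,j}$ is $R_{ik}$-disjoint and $B_i$-bounded.

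The final step is the bookkeeping. I would place $\W{i,j}$ at position $(i-1)k + j$ in a sequence of length $nk$. Since $(i-1)k + j \le ik$ and the $R_\ell$ are non-decreasing, $R_{(i-1)k+j} \le R_{ik}$, so the family at position $\ell = (i-1)k+j$ is $R_\ell$-disjoint, uniformly bounded (by $B_i$), and the union $\bigcup_{i,j}\W{i,j}$ still covers $X$ because each $U \in \U{i}$ is covered by $\W{i,1} \cup \cdots \cup \W{i,k}$ restricted to $U$ and $\bigcup_i \U{i}$ covers $X$. This verifies the definition of asymptotic property~C for $X$.

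There is no serious obstacle: the proof is essentially a bookkeeping argument, and the only mild subtlety is making sure we apply the hypothesis to a sequence of scales that is large enough (namely the $k$-th, $2k$-th, $3k$-th terms of the original sequence) so that the subsequent splitting into $k$ pieces via the uniform asdim bound can reuse those scales to cover the intermediate positions $(i-1)k+1, \ldots, ik$.
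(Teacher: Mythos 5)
Your proposal is correct and follows essentially the same argument as the paper: apply the hypothesis to the scales $R_k, R_{2k}, \ldots$, split each $U \in \U{i}$ into $k$ many $R_{ik}$-disjoint, $B_i$-bounded families via the uniform asymptotic dimension bound, use the $R_{ik}$-disjointness of $\U{i}$ to merge these across $U$, and place the resulting family at position $(i-1)k+j \le ik$ so that it is $R_{(i-1)k+j}$-disjoint. The bookkeeping and the disjointness check across distinct members of $\U{i}$ match the paper's proof exactly.
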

\begin{proof} Let $R_1\le R_2\le\cdots$ be a given sequence of real numbers. For $i\in\{1,2,\ldots, n\}$, take $\U{i}$ to be an $R_{ik}$-disjoint collection of subsets of $X$ that has asymptotic dimension less than $k$ uniformly so that $\bigcup_i\U{i}$ covers $X$. 

Fix some $i$ and suppose that $U\in\U{i}$. Using the uniformity of the asymptotic dimension of the $\U{i}$, take some $B$ and take families $\V{j}^U$ ($j=1,\ldots, k$) that comprise an $R_{ik}$-disjoint collection of subsets of $U$ whose diameter is bounded by $B$ so that the union covers $U$. Let  $\V{(i-1)k+j}$ denote the collection $\{V\in \V{j}^U\mid U\in\U{i}\}.$ Since the family $\U{i}$ is itself $R_{ik}$-disjoint, it is clear that the collection $\V{(i-1)k+j}$ ($j=1,\ldots, k$) will be an $R_{ik}$-disjoint, $B$-bounded cover of $\cup_{U\in\U{i}} U$. Since we assume the given sequence to be non-decreasing, we have found a uniformly bounded cover of $X$ by $nk$ families $\V{1},\ldots,\V{nk}$ of subsets of $X$ 
with the property that $\V{t}$ is $R_{t}$-bounded, for each $t=1,\ldots,nk$. Thus, $X$ has property~C, as required. 
\end{proof}

\begin{corollary}\cite[Theorem III.8]{sher2011}
  If $X$ has asymptotic property~C and $Y$ has finite asymptotic dimension, then $X \times Y$ has asymptotic property~C.
\end{corollary}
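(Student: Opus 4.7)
The plan is to apply the Decomposition Lemma (Lemma~\ref{lem:decomposition}) with the constant $k = \as Y + 1$. Fix a non-decreasing sequence $R_1 \le R_2 \le \cdots$. Using the hypothesis that $X$ has asymptotic property~C, I obtain a finite sequence $\U1, \U2, \ldots, \U{n}$ of uniformly bounded families of subsets of $X$ whose union covers $X$, with each $\U{i}$ being $R_i$-disjoint.

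Next I thicken each family by $Y$, setting $\W{i} = \{ U \times Y \colon U \in \U{i} \}$. Immediately $\bigcup_i \W{i}$ covers $X \times Y$, and since the product metric dominates $\dist_X$ on the first coordinate, each $\W{i}$ is $R_i$-disjoint.

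The remaining task is to verify that $\W{i}$ has asymptotic dimension at most $\as Y$ uniformly. Fix $i$ and a scale $R$. Since $\as Y = k - 1$, I can cover $Y$ by $k$ many $R$-disjoint, $B_Y$-bounded families $\V0, \V1, \ldots, \V{k-1}$. For any $U \in \U{i}$, the $k$ families $\{ U \times V \colon V \in \V{j} \}$ for $j = 0, \ldots, k-1$ cover $U \times Y$, remain $R$-disjoint, and their members have diameter at most $\sqrt{\mesh(\U{i})^2 + B_Y^2}$, a bound depending only on $i$ and $R$, not on the particular $U$. This supplies the uniformity required by the Decomposition Lemma.

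The hypotheses of Lemma~\ref{lem:decomposition} are then met, yielding asymptotic property~C on $X \times Y$. I do not foresee a significant obstacle: the argument is essentially quantifier-chasing through the definition of uniform asymptotic dimension, using that each individual family $\U{i}$ produced by asymptotic property~C is already uniformly bounded, which lets $\mesh(\U{i})$ appear as a harmless constant in the diameter estimate for $U \times V$.
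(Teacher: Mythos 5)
Your proposal is correct and is exactly the argument the paper intends: the corollary is stated as an immediate consequence of the Decomposition Lemma, applied with $k=\as Y+1$ to the families $\{U\times Y\colon U\in\U{i}\}$ obtained from an asymptotic property~C decomposition of $X$, whose members have asymptotic dimension at most $\as Y$ uniformly by covering each $U\times Y$ with products $U\times V$. Your quantifier bookkeeping (the bound $\sqrt{\mesh(\U{i})^2+B_Y^2}$ depending only on $i$ and the scale $R$) is precisely what the uniformity condition requires, so there is nothing to add.
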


\subsection{Free product of metric spaces}

\begin{definition}
  Let $(X, x_0)$ be a metric space with a fixed base point~$x_0$.
  The \df{free product} $\ast X$ is the metric space whose
elements are words in the alphabet $X \setminus \{ x_0 \}$ along with the trivial word $\epsilon$. 

We denote $X\setminus \{x_0\}$ by $X^\ast$. We identify elements of $X^\ast$ with the set of single-letter (nontrivial) words in $\ast X$.
  
There is a natural notion of \df{concatenation} of words, which we usually denote by juxtaposition. When we want to emphasize the concatenation of two elements $u$ and $v$ in $\ast X$ we write $u\cdot v$. If $u\in \ast X$ and $A\subset \ast X$, we write $u\cdot A$ for the set of all elements of $\ast X$ that are obtained by concatenating an element of $A$ on the left of the element $u\in \ast X$. The notation $X^m$ is used to denote those elements of $\ast X$ that are obtained by concatenating exactly $m$ elements of $X^\ast$. 
We define $X^0=\{\epsilon\}$ and use the notation $X^{\leq m}$ for $\bigcup_{i=0}^m X^i$. Finally, if $A$ and $B$ are subsets of $\ast X$ then, we define their \df{concatenation} $A\cdot B$ to be all elements of the form $a\cdot b$ with $a\in A$ and $b\in B$.

  We define the \df{norm} of the trivial word to be $0$ and the \df{norm} of a letter $x \in X^\ast$ 
to be $\| x \| = \dist(x_0, x)$.
  The \df{norm} of a composite word is the sum of norms of its letters: $\| x_1 x_2 \cdots x_k \| = \sum_{i = 1}^k \| x_i \|$.
  The \df{distance} between two elements of $\ast X$ is determined by eliminating their common prefix: if $w = u \cdot x \cdot v$ and $w' = u \cdot x' \cdot v'$ (with $u, v, v' \in \ast X$ and $x, x' \in X^\ast$, 
$x \neq x'$), then $\dist(w, w') = \dist(x, x') + \| v \| + \| v' \|$. The distance from $w$ to the trivial word $\epsilon$ is equal to the norm $\| w \|$ of $w$.
\end{definition}

\begin{comm}
  For simplicity of proofs we defined the \emph{unary} free product of a \emph{single} metric space $X$. For applications, we use the fact that if $X$ and $Y$ are metric spaces, then $X \ast Y$ embeds isometrically into $\ast (X \vee Y)$, where $X \vee Y$ denotes the wedge of $X$ and $Y$ and $X \ast Y$ denotes the free product of $X$ and $Y$ defined as in~\cite{bell-dranishnikov2001}.
\end{comm}

\begin{definition}
  A subset $A \subset \ast X$ is said to be \df{flat} if there exists some $x \in \ast X$ such that $A \subset x \cdot (X^\ast)$. 
\end{definition}

\begin{definition}
Let $R>0$. The \df{$R$-cone} of a set $A \subset \ast X$ is defined to be
  \[
    \con_R A = A \cdot (\ast \bar B(x_0, R)) \subset \ast X,
  \]
  where $\bar B(x_0, R)$ denotes the closed ball with center $x_0$ and radius $R$.
\end{definition}

\begin{definition}\label{def:discrete}
We will say that the metric space $X$ is \df{discrete in the metric sense} if $\inf\{\dist(x,y)\colon x,y\in X, x\neq y\}>0$.
\end{definition}

\begin{lemma}\label{lem:cone family}
  Let $X$ be a metric space that is discrete in the metric sense. If $\{ A_\alpha \}$ is a family of flat uniformly bounded subsets of $\ast X$, then for each $M$, the family $\{ \con_M A_\alpha \}$ has asymptotic dimension bounded by $1$ uniformly.
\end{lemma}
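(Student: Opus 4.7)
The strategy is to view $T := \ast \bar B(x_0, M)$ as coarsely a tree, cover it by the standard depth-based stratification with a distinguished base set containing $\epsilon$, transfer this cover to each copy $a \cdot T$ inside $\con_M A_\alpha$ by concatenation, and then merge the copies of the base set across $A_\alpha$ into a single bounded piece. The bound on $\diam A_\alpha$ uniform in $\alpha$ is what makes the merging harmless.

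Regard $T$ as a rooted tree $\widetilde T$ with root $\epsilon$, where each word $w$ has children $w\cdot y$ for $y\in\bar B(x_0,M)\setminus\{x_0\}$ joined by an edge of length $\|y\|\le M$. The distance formula in $\ast X$ gives $d_T\le d_{\widetilde T}\le d_T+2M$, so $T$ and $\widetilde T$ are coarsely equivalent with additive defect $2M$. Given $R>0$, pick $\rho>R+2M$ and stratify by depth via $L_k=\{w:k\rho\le\|w\|<(k+1)\rho\}$. Let $\mathcal V_0$ be the connected components of $\bigcup_{k\text{ even}} L_k$ in $\widetilde T$ and $\mathcal V_1$ the components of $\bigcup_{k\text{ odd}} L_k$; each component has diameter at most $2\rho$, and same-color components lie in strata separated by an odd-stratum buffer of width $\rho$, so they are $\rho$-disjoint in $\widetilde T$ and hence $R$-disjoint in $T$. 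Since $L_0$ is closed under taking ancestors in $\widetilde T$, it is itself a single component $V_0\in\mathcal V_0$ containing $\epsilon$, and every other element of $\mathcal V_0\cup\mathcal V_1$ consists of words with $\|w\|\ge\rho>R$.

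For each $a\in A_\alpha$, $w\mapsto a\cdot w$ is an isometry from $T$ onto $a\cdot T\subset\ast X$. Define
\[
  \mathcal U_0^\alpha=\{A_\alpha\cdot V_0\}\cup\{a\cdot V : a\in A_\alpha,\,V\in\mathcal V_0\setminus\{V_0\}\},\qquad
  \mathcal U_1^\alpha=\{a\cdot V : a\in A_\alpha,\,V\in\mathcal V_1\}.
\]
These two families clearly cover $\con_M A_\alpha=\bigcup_{a\in A_\alpha} a\cdot T$. Using the distance formula $\dist(x_\alpha\cdot y\cdot w,\,x_\alpha\cdot y'\cdot w')=\dist(y,y')+\|w\|+\|w'\|$ for $y\ne y'$, I would verify $R$-disjointness: pieces lying on the same copy $a\cdot T$ inherit it from $\mathcal V_0$ or $\mathcal V_1$; for pieces on distinct copies $a\cdot T$ and $a'\cdot T$, by construction at least one of them is not the merged base, so its representing $w$ satisfies $\|w\|\ge\rho>R$, and the distance formula then gives distance $\ge \|w\|>R$. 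The merged set $A_\alpha\cdot V_0$ has diameter at most $\diam A_\alpha+2\,\diam V_0\le D+2\rho$, where $D$ is the uniform bound on $\diam A_\alpha$, while every other element of $\mathcal U_0^\alpha\cup\mathcal U_1^\alpha$ has diameter at most $2\rho$. So the common mesh bound $B=D+2\rho$ depends only on $R$, $M$, and $D$, not on $\alpha$, giving uniform asymptotic dimension at most $1$.

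The only step that really needs care is the merging. Without it, the pieces $a\cdot V_0$ over distinct $a\in A_\alpha$ would only be $\dist(y,y')$-disjoint, and although the discreteness hypothesis guarantees $\dist(y,y')\ge\inf\{\dist(x,x'):x\ne x'\in X\}>0$, this infimum is generally much smaller than $R$. Replacing those pieces by the single bounded set $A_\alpha\cdot V_0$ exploits the uniform bound on $\diam A_\alpha$ and trades what could have been arbitrarily small cross-copy separation for a bounded enlargement of the mesh. Once this is done the rest is routine case-chasing from the distance formula.
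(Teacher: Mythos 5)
Your overall architecture---decomposing each copy $a\cdot T$ of $T=\ast\bar B(x_0,M)$ separately and then merging the base pieces into the single bounded set $A_\alpha\cdot V_0$---is a legitimate alternative to the paper's route, which instead builds one auxiliary unit-edge simplicial tree $T_\alpha$ with an extra root vertex joining the copies, proves that $\con_M A_\alpha\to T_\alpha$ is a quasi-isometry with constants depending only on $E=\inf\{\dist(x,y)\colon x\neq y\}$, $D=\sup_\alpha\diam A_\alpha$ and $M$, and quotes the known uniform dimension-one decomposition of simplicial trees. Your merging step and all of your cross-copy disjointness estimates are correct. However, there is a genuine gap in your decomposition of $T$ itself: two \emph{distinct} connected components of $\bigcup_{k\ \mathrm{even}}L_k$ can lie in the \emph{same} stratum $L_k$, and for those the ``odd-stratum buffer'' argument says nothing. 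If $w,w'\in L_k$ have their meet $u$ in $L_{k-1}$ with $\|u\|$ just below $k\rho$, they lie in different components (the geodesic in $\widetilde T$ dips into $L_{k-1}$), yet $\dist_{\widetilde T}(w,w')=\|w\|+\|w'\|-2\|u\|$ can be as small as $2E$. Concretely, take $X=\{x_0,p,q\}$ with all nonzero distances equal to $1$, $M=1$, $R=10$, $\rho=13$: then $p^{26}$ and $p^{25}q$ both lie in $L_2$, lie in different components of $L_2$ (their meet $p^{25}$ has norm $25$, hence lies in $L_1$), and are at distance $\dist(p,q)=1$ in $T$. So $\mathcal{V}_0$ is not $R$-disjoint, and neither is $\mathcal{U}_0^\alpha$, since $a\cdot p^{26}$ and $a\cdot p^{25}q$ lie in distinct members of it at distance $1$.

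The repair is standard and cheap: on $L_k$ replace ``same connected component'' by the coarser relation $w\sim w'$ iff their meet $u$ satisfies $\|u\|\ge (k-1)\rho$. This is still an equivalence relation (meets along a chain only move up the tree); distinct classes in the same stratum now have $\dist_{\widetilde T}(w,w')>2\rho$, hence $\dist_T(w,w')>2\rho-2M>R$; classes have $\widetilde T$-diameter at most $4\rho$, so your mesh bound becomes $D+4\rho$; and $L_0$ is still a single class containing $\epsilon$. Everything else in your argument (the comparison $\dist_T\le\dist_{\widetilde T}\le\dist_T+2M$, the merging, and the cross-copy estimates using $\|w\|\ge\rho>R$) then goes through verbatim. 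Alternatively, you could follow the paper and outsource the decomposition to the cited uniform statement for simplicial trees rather than constructing it by hand.
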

\begin{proof}
  Let $E = \inf \{ \dist(x, y) \colon x,y \in X, x \neq y \}$.
  By the assumption, $E > 0$.
  Let $D = \sup_\alpha \{ \diam(A_\alpha) \}$.
  By the assumption, $D < \infty$.
  
  For each $\alpha$ we define a simplicial tree $T_\alpha$.
  The set of vertices of $T_\alpha$ consists of elements of $\con_M A_\alpha$ and of an additional element $r_\alpha$, which we call the root vertex.
  The root vertex $r_\alpha$ is connected by edges to all vertices from base $A_\alpha$ of the cone $\con_M A_\alpha$.
  Two vertices $p, t$ of the cone $\con_M A_\alpha$ are connected by an edge if and only if there exists $x \in \bar B(x_0, M)$, $x \neq x_0$, such that $p = t\cdot x$ or $t = p\cdot x$.
  The metric $\dist_{T_\alpha}$ on $T_\alpha$ is the geodesic metric with edge lengths equal to $1$.
  
  Consider the map $f_\alpha \colon \con_M A_\alpha \to T_\alpha$ that maps elements of $\con_M A$ to their corresponding vertices of $T_\alpha$. We will show that $f_\alpha$ satisfies the following inequalities
\[
    \frac{\dist(x,y)}{M} - \frac{D}{M} \leq \dist_{T_\alpha}(f_\alpha(x), f_\alpha(y)) \leq \frac{\dist(x,y)}{E} + 3\hbox{,}
\]
for each $x,y \in \con_M A_\alpha$.

  As a first case we consider $x,y \in \con_M A_\alpha$ such that $x = a x_1 x_2 \cdots x_k$ and $y = b y_1 y_2 \cdots y_l$ where $a, b \in A_\alpha$, $a \neq b$ and $x_i, y_i \in \bar B(x_0, R)^\ast$, $k, l \geq 0$. We have
  \[
    \dist(x, y) = \left(\sum_{i=1}^{k} \| x_i \| \right)
      + \dist(a, b) + \left(\sum_{i=1}^{l} \| y_i \| \right)
  \]
  and
  \[
    \dist_{T_\alpha}(f_\alpha(x), f_\alpha(y)) = 2 + k + l.
  \]
  We have
  \[
    E \leq \dist(a,b) \leq D, \ \ \ 
    E \leq \| x_i \| \leq M,\ \ \ 
    E \leq \| y_i \| \leq M.
  \]
  Therefore
  \[
    (k + l + 1) E \leq \dist(x,y) \leq (k + l) M + D\hbox{.}
  \]
  
    As a second case consider $x, y \in \con_M A_\alpha$ such that $x = c \cdot x_1 x_2 \cdots x_k$, $y = c \cdot y_1 y_2 \cdot y_l$, $c \in \con_M A_\alpha$, $x_i, y_i \in \bar B(x_0, R)^\ast$, $x_1 \neq y_1$.
  We have
  \[
    \dist(x, y) = \left( \sum_{i = 2}^{k} \| x_i \| \right) +
       \dist(x_1, y_1) + \left( \sum_{i=2}^l \| y_i \| \right)
  \]
  and
  \[
    \dist_{T_\alpha}(f_\alpha(x), f_\alpha(y)) = k + l.
  \]
  We have
  \[
    E \leq \dist(x_1, y_1) \leq D, \ \ \ 
    E \leq \| x_i \| \leq M,\ \ \ 
    E \leq \| y_i \| \leq M.
  \]
  Therefore
  \[
   (k + l - 1) E \leq \dist(x,y) \leq (k + l) M\hbox{.}
  \]
  
  Combining inequalities from both cases we have
  \[
    \dist_{T_\alpha}(f_\alpha(x), f_\alpha(y)) \leq k + l + 2
      \leq \frac{\dist(x,y)}{E} + 3
  \]
  and
  \[
   \frac{\dist(x,y)}{M} - \frac{D}{M} \leq k + l \leq \dist_{T_\alpha}(f_\alpha(x), f_\alpha(y))\hbox{.}
  \]
  
  Hence the inequalities that we sought:
  \[
    \frac{\dist(x,y)}{M} - \frac{D}{M} \leq \dist_{T_\alpha}(f_\alpha(x), f_\alpha(y)) \leq \frac{\dist(x,y)}{E} + 3.
  \] 
  Therefore each $f_\alpha$ is a quasi-isometry with constants that do not depend on $\alpha$.

In the example following Theorem 19 of~\cite{b-d-asymptotic-dimension}, it is shown that for a given $r>0$ one can take a cover of any simplicial tree by $3r$-bounded $r$-disjoint families of subsets. It follows that any family of simplicial trees has asymptotic dimension bounded by $1$ uniformly. 
  Since the family $\{ \con_M A_\alpha \}$ is uniformly quasi-isometric to a family of simplicial trees, it has asymptotic dimension bounded by $1$ uniformly.
\end{proof}

\begin{definition}
  We say that a set $A$ is \df{$R$-connected} if for every $x, y \in A$ there exists a finite sequence $x_0 = x, x_1, x_2, \ldots, x_k = y$ such that $\dist(x_i, x_{i+1}) \leq R$ for each $i = 1, 2, \ldots, k-1$. An \df{$R$-connected component} of $A$ (or simply an \df{$R$-component}) is a maximal $R$-connected subset of $A$.
  We recall that in Section \ref{sec:preliminaries} we described the sets $A, B$ as being \df{$R$-disjoint} if for each $x \in A$, $y \in B$ we have $\dist(x,y) > R$.
\end{definition}


\begin{definition}
  The \df{order} of an element $x$ of $\ast X$ is the non-negative integer $m$ such that $x \in X^m$. We write $\ord(x)=m$ in this case. 
  \end{definition}


\begin{lemma}\label{lem:components}
  Let $X$ be a discrete metric space and let $R > 0$.
  Let $A \subset \ast X$ be such that $A \subset \ast X \cdot (A \setminus \bar B(x_0, R))$.
  Assume that $R$-connected components of $A$ are uniformly bounded.
  Then for each $M$, $R$-connected components of $\con_M A$ have asymptotic dimension bounded by $1$ uniformly.
\end{lemma}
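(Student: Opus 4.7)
Plan: The plan is to reduce to the Cone Family Lemma (Lemma~\ref{lem:cone family}) by showing that every $R$-component of $\con_M A$ is contained in a cone of uniformly bounded parameter over a flat, uniformly bounded subset of $\ast X$. The first step is to isolate canonical base points: for each $a \in A$ write $F_a = a \cdot \ast \bar B(x_0, M) \subset \con_M A$, and let $A_{\min} \subset A$ consist of those $a$ that are minimal under the relation $a' \preceq a \iff a \in F_{a'}$. The fibers $\{F_a : a \in A_{\min}\}$ are pairwise disjoint and partition $\con_M A$, so every $c \in \con_M A$ acquires a unique canonical base $\beta(c) \in A_{\min}$. The hypothesis $A \subset \ast X \cdot (A \setminus \bar B(x_0, R))$ amounts to saying that $A$ avoids the embedded ball $\bar B(x_0, R) \subset \ast X$, which rules out degenerate small elements in $A$.

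The technical heart is a case analysis on the distance formula in $\ast X$. I would prove: whenever $c, c' \in \con_M A$ are $R$-close with distinct canonical bases $b = \beta(c)$ and $b' = \beta(c')$, then either (i)~$b$ and $b'$ are incomparable in the prefix order and $\dist(b, b') \leq R$ (so they lie in a common $R$-component of $A$, of diameter at most $D$), or (ii)~one is a proper prefix of the other, say $b' = b \cdot y$, with every letter of the gap $y$ of norm at most $R + M$. In case~(ii), the minimality of $b'$ forces $y \notin \ast \bar B(x_0, M)$; combined with $\dist(c, c') \leq R$ and the distance formula, this yields the uniform bound on the individual letter norms of~$y$.

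For an $R$-component $C$ of $\con_M A$, set $A_0 = A_{\min} \cap C$ and $p_C = \mathrm{lcp}(A_0)$. Iterating the case analysis along $R$-chains within $C$, each $b \in A_0$ decomposes as $b = p_C \cdot y_b$ with the individual letter norms of $y_b$ uniformly bounded by a constant depending only on $R$, $M$, and $D$. When $p_C \neq \epsilon$ take $S_C = \{p_C\}$, which is a flat singleton since $p_C$ has positive order; when $p_C = \epsilon$ take $S_C$ to be the set of first letters of the elements of $A_0$, which is flat in $\epsilon \cdot X^\ast$ and has diameter controlled by $D$ (distinct first letters arise from pairs satisfying case~(i), and thus lie within distance $R$ in $X$). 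In either case, possibly combined with the singleton $\{p_C\}$ when $p_C \in A_0$, one verifies $C \subset \con_{M'}(S_C)$ for a uniform $M'$ (depending on $R$, $M$, $D$). Lemma~\ref{lem:cone family} applied to the family $\{S_C\}$, together with the finite-union theorem for asymptotic dimension, then yields the required uniform bound of~$1$.

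The main obstacle is the case analysis underlying the technical heart, especially case~(ii): bounding the letter norms of the gap $y$ requires careful manipulation of the distance formula in $\ast X$, making full use of the disjointness of canonical fibers and the hypothesis on $A$. The subsequent claim that the decomposition $b = p_C \cdot y_b$ has uniformly bounded letter norms also requires an inductive argument tracking how case~(i) and case~(ii) steps interact along an $R$-chain in $C$.
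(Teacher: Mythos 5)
Your overall strategy---show that each $R$-component $C$ of $\con_M A$ lies in a cone of uniformly bounded parameter over a flat, uniformly bounded set and then invoke Lemma~\ref{lem:cone family}---is the same as the paper's, and your ``technical heart'' (comparing canonical bases of $R$-close points: either prefix-incomparable and within $R$, hence within $D$, or prefix-related with gap letters of norm at most $R+M$) is fine as far as it goes. The gap is in the step that converts this into the containment $C\subset\con_{M'}(S_C)$. The claim that every base decomposes as $b=p_C\cdot y_b$ with \emph{all} letter norms of $y_b$ bounded by a constant depending only on $R,M,D$ is false: the first (branching) letter after the common prefix is constrained only to lie in a set of bounded \emph{diameter}, not of bounded \emph{norm}. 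For instance, arrange components $C_n$ whose bases are $\{g\cdot q_n,\ g\cdot q'_n\}$ with $\|g\|>R$, $\dist(q_n,q'_n)=1$, $\|q_n\|\to\infty$, and the different $n$'s far apart; all hypotheses of the lemma hold, $p_{C_n}=g\neq\epsilon$, yet your prescription $S_{C_n}=\{p_{C_n}\}$ forces $M'\geq\|q_n\|$, so no uniform $M'$ exists. The branching letters must be absorbed into the flat base set in the $p_C\neq\epsilon$ case as well, not only when $p_C=\epsilon$; this is exactly what the paper does by coning over the lowest-order slice $C_{k_0}=C\cap X^{\leq k_0}$.

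Second, even where you do place the first letters into $S_C$, the asserted diameter bound ``controlled by $D$'' is justified only for a \emph{single} case~(i) step; along an $R$-chain the branching letter can change many times, and bounding the accumulated drift is precisely the hard part, which you defer to an unspecified induction. The paper does this work by a different mechanism: using the hypothesis $A\subset \ast X\cdot(A\setminus\bar B(x_0,R))$ it shows that $R$-close points of $\con_M A$ are joined by chains of \emph{adjacent} $R$-close points, deduces that $C\cap X^{\leq n}$ is $R$-connected for every $n$ (so $C_{k_0}$ is $R$-connected, flat, of diameter at most $2M+2R+2D$), and then proves $C_k\subset\con_{M+R+D}C_{k_0}$ by induction on $k$. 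Your proposal uses the hypothesis on $A$ only through ``$A$ misses the ball $\bar B(x_0,R)$,'' which is a sign that the mechanism driving this induction is missing. A smaller slip: $A_0=A_{\min}\cap C$ is not the right index set, since the canonical base $\beta(c)$ of a point $c\in C$ need not lie in $C$ (the path through its fiber has steps of size up to $M>R$); you want the set of bases $\beta(C)$ instead. So the reduction to Lemma~\ref{lem:cone family} is the right idea, but the containment $C\subset\con_{M'}(S_C)$ is not established as written.
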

\begin{proof}
  As a first step we will show that 
\begin{quote}
$C$ is an $R$-connected component of $\con_M A$
  \[ \Updownarrow \]
$C \cap X^{\leq n}$ is an $R$-connected component of $\con_M A \cap X^{\leq n}$ for each $n$
\end{quote}
  
  The implication $\Uparrow$ is trivial.
  
  We will say that two words $w_1, w_2$ in $\ast X$ are \df{adjacent}  if 
  \begin{enumerate}
  \item $w_1 = w_2 \cdot x$ for some $x \in X$ or
  \item $w_2 = w_1 \cdot x$ for some $x \in X$ or
  \item $w_1 = w \cdot x_1$ and $w_2 = w \cdot x_2$ for some $w \in \ast X$ and $x_1, x_2 \in X$.
  \end{enumerate}
  In other words $w_1$ and $w_2$ are adjacent if they differ by their last letter or if one is equal to the other with a single letter appended at the end.
  
  Let $x = p \cdot x_1 x_2 \cdot x_k$, $y = p \cdot y_1 y_2 \cdot y_l$, $x, y \in \con_M A$, $\dist(x,y) \leq R$, $p$ is the common prefix of $x$ and $y$. We have
  \[
    \dist(x, y) = \left(\sum_{i = 2}^{k} \| x_i \| \right)
    	      + \dist(x_1,y_1) +
              \left(\sum_{i = 2}^{l} \| y_i\| \right)
         \leq R
  \]
  Therefore $x_i, y_i \in \bar B(x_0, R)$ for $i \geq 2$ so $x_i, y_i \not\in A$ for $i \geq 2$. Hence $p \cdot x_1 x_2 \cdots x_i \in \con_M A$ and $p \cdot y_1 y_2 \cdots y_i \in \con_M A$ for each $i$.
  Therefore $x = p \cdot x_1 x_2 \cdots x_k$, $p \cdot x_1 x_2 \cdots x_{k-1}$, $\ldots$, $p \cdot x_1$, $p \cdot y_1$, $p \cdot y_1 y_2$, $\ldots$, $p \cdot y_1 y_2 \cdots y_l = y$ is a sequence of $R$-close adjacent elements of $\con_M A$.
  
  Therefore if $C$ is an $R$-connected component of $\con_M A$, then for each $x, y \in C$ there exists a sequence $x = x_0, x_1, \ldots, x_k = y$ of adjacent $R$-close points of $C$.

  Observe that if $x_1, x_2, \ldots, x_k$ is a sequence of adjacent points, if $\ord(x_1) = \ord(x_k)$, and if $\ord(x_i) > \ord(x_1)$ for all $i$ with $2 \leq i \leq k-1$, then $x_1 = x_k$. This implies that if $x, y \in \con_M A$ are in the same $R$-connected component and $x, y \in X^{\leq n}$, then $x,y$ are in the same $R$-connected component of $C \cap X^{\leq n}$. The implication $\Downarrow$ is proven.
  
Let $D$ be a bound on the diameters of the $R$-connected components of $A$.
Let $C$ be an $R$-connected component of $\con_M A$.
Let $C_k = C \cap X^{\leq k}$.
We have proven that each $C_k$ is $R$-connected.
Let $k_0$ be the smallest integer such that $C_{k_0} \neq \emptyset$.
The diameter of $C_{k_0}$ is bounded by $2M + 2R + 2D$.
We will show by induction on $k$ that
\[
  C_k \subset \con_{M+R+D} C_{k_0}.
\]

Consider $x \in \left( C_{k+1} \setminus C_k \right)$.
Since $C \subset \con_M A$, we have $x \in A$ or $x \in \con_M C_k$.

Let $x\in A$. Then $x$ has to be in an $R$-connected component of $A$ that is $R$-close to $\con_M C_k$. Therefore $x \in \con_{M+R+D} C_k$.

Let $x \in \con_M C_k$. Then $\con_M C_k \subset \con_{M+R+D} C_k$.
Hence $x \in \con_{M+R+D} C_k \subset \con_{M+R+D} \con_{M+R+D} C_{k_0} = \con_{M+R+D} C_{k_0}$.

We proved by induction that if $C$ is an $R$-connected component of $\con_M A$, then $C \subset \con_{M+R+D} C_{k_0}$. Observe that $C_{k_0}$ is flat and has uniformly bounded diameter (by $2M+2R+2D$). Therefore by Lemma~\ref{lem:cone family}, the collection of $R$-connected components of $\con_M A$ has asymptotic dimension bounded by $1$ uniformly.
\end{proof}

\begin{theorem}
  \label{thm:free product x}
  Let $X$ be a discrete metric space with a fixed base point $x_0$.
  If $X$ has asymptotic property~C, then $\ast X$ has asymptotic property~C.  
\end{theorem}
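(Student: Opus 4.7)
The plan is to verify the hypotheses of the Decomposition Lemma with $k = 1$, producing families in $\ast X$ whose asymptotic dimension is uniformly bounded by $1$. Given a non-decreasing sequence $R_1 \le R_2 \le \cdots$, I apply asymptotic property~C of $X$ to obtain a finite cover of $X$ by families $\U{1}, \ldots, \U{n}$, with $\U{i}$ being $R_i$-disjoint and $B_i$-bounded. Fix $M = R_n$ and for each $i \le n$ set
\[
  A_i = \ast X \cdot \Bigl( \bigcup \U{i} \setminus \bar B(x_0, R_i) \Bigr),
\]
the set of words that end in a single letter drawn from $\bigcup \U{i}$ of norm $> R_i$. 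Let $\V{i}$ be the family of $R_i$-connected components of $\con_M A_i$. Finally, let $\V{n+1}$ be the singleton family $\{\ast \bar B(x_0, R_n)\}$.

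Next I would verify the hypotheses of Lemma~\ref{lem:components} for each $A_i$. The inclusion $A_i \subset \ast X \cdot (A_i \setminus \bar B(x_0, R_i))$ is immediate: the terminal letter $x$ of any $a = u \cdot x \in A_i$, viewed as a single-letter word, lies in $A_i$ and has norm $> R_i$. For uniform boundedness of the $R_i$-components of $A_i$: if $a \neq a'$ are elements of $A_i$ with $\dist(a, a') \le R_i$, the distance formula in $\ast X$ forces $a = u \cdot x$ and $a' = u \cdot y$ for a common prefix $u$ (any letter beyond the common prefix other than the terminal one would contribute a norm exceeding $R_i$ to the distance), after which $\dist(x, y) \le R_i$ places $x$ and $y$ in the same element of $\U{i}$ since $\U{i}$ is $R_i$-disjoint. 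Hence each $R_i$-component of $A_i$ has diameter at most $B_i$. Lemma~\ref{lem:components} then yields that $\V{i}$ has asymptotic dimension $\le 1$ uniformly, and it is $R_i$-disjoint by construction. For $\V{n+1}$: the argument of Lemma~\ref{lem:cone family} shows that $\ast \bar B(x_0, R_n)$ is quasi-isometric to a simplicial tree with edge lengths in $[\delta, R_n]$, where $\delta = \inf\{\dist(x, y) : x \ne y\} > 0$ by discreteness of $X$; hence its asymptotic dimension is $\le 1$, and the singleton family is trivially $R_{n+1}$-disjoint.

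To verify coverage: given $w \in \ast X$, if every letter of $w$ has norm $\le R_n$, then $w \in \ast \bar B(x_0, R_n)$ and is covered by $\V{n+1}$ (this includes the trivial word $\epsilon$). Otherwise, let $j$ be the rightmost index with $\|w_j\| > R_n$ and pick $i_0 \le n$ with $w_j \in \bigcup \U{i_0}$. Then $\|w_j\| > R_n \ge R_{i_0}$ and all letters $w_{j+1}, \ldots, w_k$ have norm $\le R_n = M$, so the decomposition $w = (w_1 \cdots w_j) \cdot (w_{j+1} \cdots w_k)$ witnesses $w \in \con_M A_{i_0}$, placing $w$ in some $R_{i_0}$-component belonging to $\V{i_0}$. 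Applying the Decomposition Lemma with $k = 1$ concludes that $\ast X$ has asymptotic property~C.

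The main obstacle is the bookkeeping in verifying that the $R_i$-components of $A_i$ are uniformly bounded, which requires a careful case analysis of the distance formula in $\ast X$ together with the structural property that every terminal letter of an element of $A_i$ has norm $> R_i$. Setting $M = R_n$ is what synchronizes the cone size with the largest threshold at play and allows the rightmost large-norm letter of any $w$ to serve as the witness for $w \in \con_M A_{i_0}$.
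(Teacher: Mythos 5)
Your argument is correct and is essentially the paper's own proof: you push the asymptotic property~C families of $X$ into $\ast X$ by taking words whose last letter is a large-norm element of $\bigcup\U{i}$, cone off, pass to $R_i$-connected components, and combine Lemma~\ref{lem:components} with the Decomposition Lemma, with the small-letter words absorbed by the cone over the base point. The only deviations are parameter choices -- you delete $\bar B(x_0,R_i)$ and cone at scale $R_n$, where the paper deletes $\bar B(x_0,R_{n+1})$ and cones at scale $R_{n+1}+1$ -- and these do not affect the argument.
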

\begin{proof}
  Let $R_1 \leq R_2 \leq \ldots$ be a non-decreasing sequence of real numbers.
  Let $\U{1}, \U{2}, \ldots, \U{n}$ be a finite sequence of $R_i$-disjoint uniformly bounded families of subsets of $X$ such that $\bigcup \U{i}$ covers $X$.
  For $i=1,2,\ldots, n$, let 
  \[
    \V{i} = \{ x \cdot (U \setminus \bar B(x_0, R_{n+1})) \colon U \in \U{i}, x \in \ast X \}
  \]
  and
  \[
    \V{n+1} = \{ \{ x_0 \} \}.
  \]
  We will prove that
  \begin{enumerate}
  \item $\bigcup_{i = 1}^{n+1} (\con_{R_{n+1}} \bigcup \V{i}) = \ast X$.
  \item for each $i$, $\V{i}$ is $R_{i}$-disjoint, uniformly bounded and its elements are flat.
  \end{enumerate}
  
  To prove (1), consider an element $x = x_1 x_2 \cdots x_k \in \ast X$. If for each $i$, $\| x_i \| \leq R_{n+1}$, then $x \in \con_{R_{n+1}} \{ x_0 \}$.
Otherwise let $m$ be the largest integer such that $\| x_m \| > R_{n+1}$. Since $\bigcup \U{i}$ covers $X$, there exists an $\ell$ and a $U \in \U{\ell}$ such that $x_m \in U$. We have $x \in x_1 x_2 \cdots x_{m-1} \cdot (U \setminus \bar B(x_0, R_{n+1})) \cdot \ast \bar B(x_0, R_{n+1})$; hence $x \in \con_{R_{n+1}} \bigcup \V{l}$.

To prove (2), assume that $i$ is given, $1\le i\le n$. Observe that elements of $\V{i}$ are flat and uniformly bounded directly from the definition. So, it remains only to show that $\V{i}$ is $R_i$-disjoint. Suppose that $V_1$ and $V_2$ are distinct elements of $\V{i}$, say $V_1 = x_1 \cdot U_1$ and $V_2 = x_2 \cdot U_2$. We consider two cases: a) if $x_1=x_2$, then the result follows from the fact that the family $\U{i}$ is $R_i$-disjoint; b) if $V_1 \cup V_2$ is not flat, then we take arbitrary elements $v_1=x_1u_1\in V_1=x_1\cdot U_1$ and $v_2=x_2u_2\in V_2=x_2\cdot U_2$. Then, $\dist(v_1, v_2) \geq \min\{\|u_1\|,\|u_2\|\} > R_{n+1} \geq R_i$, by the definition of $\V{i}$ and the definition of the metric in $\ast X$.
  
  By Lemma~\ref{lem:components}, the  $R_i$-connected components of $\con_{R_{n+1} + 1} \bigcup \V{i}$ have asymptotic dimension bounded by $1$ uniformly and are $R_i$-disjoint. By the Decomposition Lemma, $\ast X$ has asymptotic property~C.
\end{proof}

\begin{corollary}
  Let $X, Y$ be metric spaces with fixed base points.
  If $X$ and $Y$ have asymptotic property~C, then $X * Y$ has asymptotic property~C.
\end{corollary}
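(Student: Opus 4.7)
My plan is to deduce this corollary directly from Theorem \ref{thm:free product x} using the isometric embedding $X \ast Y \hookrightarrow \ast(X \vee Y)$ mentioned in the comment following the definition of the free product. The strategy is to build a single pointed space whose unary free product contains $X \ast Y$ isometrically, then transfer asymptotic property~C down to the subspace. Implicit in this reduction (and in the corresponding Theorem \ref{thm:free product} from the introduction) is the hypothesis that $X$ and $Y$ are discrete in the metric sense, which is needed to apply Theorem \ref{thm:free product x}; I will assume this throughout.

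First, I would verify that the wedge $X \vee Y$, formed by identifying the base points of $X$ and $Y$ and endowed with the natural metric extending those of $X$ and $Y$, is discrete in the metric sense and has asymptotic property~C. Discreteness is immediate from the discreteness of $X$ and $Y$ individually (the minimum gap of the wedge is just the smaller of the two gaps). For asymptotic property~C, observe that $X \vee Y$ is the union of the copies of $X$ and $Y$ meeting at a single point; since each factor has asymptotic property~C by hypothesis, Proposition \ref{prop:apc-union} gives asymptotic property~C for $X \vee Y$.

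Second, I would apply Theorem \ref{thm:free product x} to $X \vee Y$ to conclude that $\ast(X \vee Y)$ has asymptotic property~C. Finally, invoking the comment that $X \ast Y$ embeds isometrically into $\ast(X \vee Y)$, it remains only to note that asymptotic property~C descends to subspaces under isometric embedding: given a sequence $R_1, R_2, \ldots$, take the witnessing families $\U{1}, \ldots, \U{n}$ for the ambient space $\ast(X \vee Y)$ and restrict each $U \in \U{i}$ to its intersection with $X \ast Y$. Restriction only decreases diameters and only increases pairwise distances, so the restricted families remain uniformly bounded and $R_i$-disjoint, and they still cover $X \ast Y$.

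I do not anticipate a genuine obstacle here: the corollary is essentially bookkeeping once the heavy lifting of Theorem \ref{thm:free product x} has been done. The one point deserving care is the isometric embedding claim itself, which is asserted without proof in the comment; unpacking it amounts to checking that the formula $\dist(w, w') = \dist(x, x') + \| v \| + \| v' \|$ for eliminating common prefixes in $\ast(X \vee Y)$ restricts on alternating words in $X^\ast$ and $Y^\ast$ to the standard metric on the free product $X \ast Y$ in the sense of \cite{bell-dranishnikov2001}. This is a straightforward case analysis on whether the differing letters lie in $X^\ast$ or $Y^\ast$.
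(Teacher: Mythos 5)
Your proposal is correct and follows essentially the same route as the paper: apply Proposition~\ref{prop:apc-union} to get asymptotic property~C for $X \vee Y$, invoke Theorem~\ref{thm:free product x} on the (metrically discrete) wedge, and then pass to $X * Y$ via the isometric embedding together with inheritance of asymptotic property~C by subspaces. Your explicit remarks on the discreteness hypothesis (which the paper's Theorem~\ref{thm:free product} also assumes) and on restricting covering families to the subspace merely flesh out steps the paper leaves implicit.
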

\begin{proof}
  By the Union Theorem (Proposition \ref{prop:apc-union}), $X \vee Y$ has asymptotic property~C.
  By Theorem~\ref{thm:free product x}, $(X \vee Y) * (X \vee Y)$
    has asymptotic property~C.
  Since $X * Y$ embeds isometrically into $(X \vee Y) * (X \vee Y)$
    and asymptotic property~C is inherited by subspaces, we are done.
\end{proof}


\section{Open questions}

Although we were able to resolve several questions related to asymptotic property~C, we would like to mention the following problems, which remain open.

We were able to show that the free product of two metric spaces (or groups) with asymptotic property~C has asymptotic property~C, but this invites the obvious question for amalgamated products~\cite[Questions 4.3]{bell-moran2015}: Is asymptotic property~C preserved by amalgamated free products? Generalizing this a bit more, one could consider the following question.

Let $\Gamma$ be a finite graph. To each vertex $v$ of $\Gamma$ we assign a group $G_v$. We let $\mathfrak{G}\Gamma$ denote the graph product of these groups; i.e., the group generated by the $G_v$ along with the additional relation that two elements of the groups $G_v$ and $G_w$ commute precisely when an edge of $\Gamma$ connects $v$ and $w$. If all the $G_v$ have asymptotic property~C, does $\mathfrak{G}\Gamma$ have asymptotic property $C$?

These questions and the question of whether a semi-direct product preserves asymptotic property~C could be addressed via a true fibering theorem. We state this in two forms, both as a statement for metric spaces and a statement in terms of exact sequences of groups.

\begin{question}
Let $f\colon X\to Y$ be a uniformly expansive map between metric spaces. Assume that $Y$ has asymptotic property~C and that $f^{-1}(A)$ has asymptotic property~C for every bounded subset $A\subset Y$. Does $X$ have to have asymptotic property~C?
\end{question}

\begin{question}\label{q:apc fiber}\ 
  Let $f \colon G \to H$ be a group homomorphism such that $H$ and $\ker f$ have asymptotic property~C. Does $G$ have to have asymptotic property~C?
\end{question}

One could also ask several questions related to the construction of Yamauchi \cite{yamauchi2015}. In particular, one could ask~\cite[Question 3.2]{yamauchi2015}: Does every countable direct union of groups with finite asymptotic dimension have asymptotic property~C? More generally, is asymptotic property~C preserved by countable direct unions?


Dydak and Virk showed that the notions of countable asymptotic dimension and straight finite decomposition complexity are equivalent~\cite{dydak-virk2016}. It is easy to see that asymptotic property~C implies either of these two notions, but the exact nature of the relationship between these notions is unknown for groups. We believe that the asymptotic property~C can be generalized to a property we propose to call $\omega$-APC. This generalization enables us to see that our fibering lemma and decomposition lemma are both special cases of a more general framework of which straight finite decomposition complexity and asymptotic property~C are the first nontrivial special cases.

\bibliography{references}

\end{document}